\theoremstyle{plain}
\newtheorem{theorem}{Theorem}[section]
\newtheorem{lemma}[theorem]{Lemma}
\newtheorem{proposition}[theorem]{Proposition}
\theoremstyle{definition}
\newtheorem{definition}[theorem]{Definition}
\theoremstyle{remark}
\newtheorem{remark}[theorem]{Remark}
\newcommand{\figcaption}[1]{\def\cptype{Figure} \caption{#1}}
\newcommand{\tblcaption}[1]{\def\cptype{Table} \caption{#1}}
\newcommand{\figref}[1]{Figure \ref{#1}}
\newcommand{\tabref}[1]{Table \ref{#1}}
\newcommand{\eq}[1]{(\ref{#1})}
\newcommand{\kerz}{\operatorname{\ker_{\mathbb Z}}}
\newcommand{\swap}{\leftrightarrow}
\newcommand{\swapj}[1]{\stackrel{#1}{\leftrightarrow}}
\newcommand{\swapoperation}[1]{\stackrel{#1}{\longleftrightarrow}}
\newcommand{\ii}{\iota}  
\bmdefine{\Bz}{z}
\bmdefine{\Bx}{x}
\bmdefine{\Bt}{t}
\newcommand{\twoelementcolumn}[2]{\renewcommand\arraystretch{0.6}{\small \left[\!\begin{tabular}{c}$#1$\\$#2$\end{tabular}\!\right]}}
\begin{document}

\title{Markov degree of the Birkhoff model}

\author{
Takashi Yamaguchi\thanks{Graduate School of Information Science and Technology, University of Tokyo}, \ 
Mitsunori Ogawa\footnotemark[1] \ 
and Akimichi Takemura\footnotemark[1]\ \thanks{JST CREST}
}
\date{November, 2013}
\maketitle

\begin{abstract}
We prove the conjecture by Diaconis and Eriksson (2006) that the Markov degree of the Birkhoff model is three.
In fact, we prove the conjecture in a  generalization of the Birkhoff model, where each voter is asked to rank a fixed number, say $r$, of candidates among all candidates.
We also give an exhaustive characterization of Markov bases for small $r$.
\end{abstract}

\noindent
{\it Keywords and phrases:} \ 
algebraic statistics,
Markov basis,
normality of semigroup,
ranking model.

\section{Preliminaries}
\label{sec:intro}
Diaconis and Eriksson \cite{diaconis-eriksson} conjectured that the Markov degree of the
Birkhoff model is three, i.e., the toric ideal associated with the Birkhoff model is generated by binomials of degree at most three. In this paper we give a proof of this conjecture in
a  generalization of the Birkhoff model, 
where each voter is asked to rank a fixed number 
of most preferred candidates among all candidates.  Our proof is based on arguments
of Jacobson and Matthews \cite{jacobson-matthews} for Latin squares.  
The set of Latin squares is a particular
fiber in our setting and our result is also a generalization of \cite{jacobson-matthews}.
See \cite{aht-book} for terminology of algebraic statistics and toric ideals used in this paper.

Consider an election, where there are $n$ candidates and $N$ voters.
Each voter is asked to give $r$ ($1\le r \le n$) preferred candidates and to rank them.
For example, let $n=5, r=3$ and let the candidates be $a,b,c,d,e$.
A vote  $(a,c,d)$ by a voter means that he/she ranks $a$ first, $c$ second and
$d$ third. For a positive integer $m$, denote $[m]=\{1,\dots,m\}$. 
When the candidates are labeled as $1,\dots,n$, the set of
possible votes is 
\[
S_{n,r} = \{ \sigma = (\sigma(1),\dots,\sigma(r)) \mid \sigma: \text{injection from $[r]$ to $[n]$}\}, \qquad |S_{n,r}|=\frac{n!}{(n-r)!},
\]
where $\sigma(j)$ denotes the candidate chosen in the $j$-th position in the vote $\sigma=(\sigma(1),\dots,\sigma(r))$.
Let $\psi_{jk}, \ j\in [r], k\in [n]$, be positive parameters and define a probability distribution over $S_{n,r}$ by
\begin{equation}
\label{eq:n-r-Birkhoff}
p(\sigma)=\frac{1}{Z}\prod_{j=1}^r \psi_{j \sigma(j)}, \qquad Z=\sum_{\sigma\in S_{n,r}} 
\prod_{j=1}^r \psi_{j \sigma(j)}.
\end{equation}
If $\psi_{jk}$ is large, then the candidate $k$ is likely to be ranked in the $j$-th position.
When $r=n$, this model is the Birkhoff model (\cite{diaconis-eriksson}, \cite{sturmfels-welker}). 
In this paper we call \eqref{eq:n-r-Birkhoff} an $(n,r)$-Birkhoff model.
The sufficient statistic of the $(n,r)$-Birkhoff model consists of numbers of times 
the candidate $k$ is ranked in the $j$-th position, $j\in [r], k\in[n]$.
We denote the sufficient statistic as $(t_{jk})_{j\in [r], k\in [n]}$.

Define a 0-1 matrix $A=A_{n,r}$ of size $rn \times (n!/(n-r)!)$, called a configuration matrix for the $(n,r)$-Birkhoff model, 
whose columns are labeled by $\sigma\in S_{n,r}$ and rows are labeled  by $(j,k)=(\text{position},\text{candidate})$, such that the $((j,k),\sigma)$-element of $A$ is one if and only if $\sigma(j)=k$. 
For example, for $n=4$, $r=3$, the configuration matrix $A_{4,3}$ with labels for its rows and columns is

\begin{equation}
\label{nr=43}
\begin{array}{ccccccccccccccccccccccccc}
&\ \begin{rotate}{90}(123)\end{rotate}&
\begin{rotate}{90}(124)\end{rotate}&
\begin{rotate}{90}(132)\end{rotate}&
\begin{rotate}{90}(134)\end{rotate}&
\begin{rotate}{90}(142)\end{rotate}&
\begin{rotate}{90}(143)\end{rotate}&
\begin{rotate}{90}(213)\end{rotate}&
\begin{rotate}{90}(214)\end{rotate}&
\begin{rotate}{90}(231)\end{rotate}&
\begin{rotate}{90}(234)\end{rotate}&
\begin{rotate}{90}(241)\end{rotate}&
\begin{rotate}{90}(243)\end{rotate}&
\begin{rotate}{90}(312)\end{rotate}&
\begin{rotate}{90}(314)\end{rotate}&
\begin{rotate}{90}(321)\end{rotate}&
\begin{rotate}{90}(324)\end{rotate}&
\begin{rotate}{90}(341)\end{rotate}&
\begin{rotate}{90}(342)\end{rotate}&
\begin{rotate}{90}(412)\end{rotate}&
\begin{rotate}{90}(413)\end{rotate}&
\begin{rotate}{90}(421)\end{rotate}&
\begin{rotate}{90}(423)\end{rotate}&
\begin{rotate}{90}(431)\end{rotate}&
\begin{rotate}{90}(432)\end{rotate}\\
(1,1)&1&1&1&1&1&1&0&0&0&0&0&0&0&0&0&0&0&0&0&0&0&0&0&0\\
(1,2)&0&0&0&0&0&0&1&1&1&1&1&1&0&0&0&0&0&0&0&0&0&0&0&0\\
(1,3)&0&0&0&0&0&0&0&0&0&0&0&0&1&1&1&1&1&1&0&0&0&0&0&0\\
(1,4)&0&0&0&0&0&0&0&0&0&0&0&0&0&0&0&0&0&0&1&1&1&1&1&1\\
(2,1)&0&0&0&0&0&0&1&1&0&0&0&0&1&1&0&0&0&0&1&1&0&0&0&0\\
(2,2)&1&1&0&0&0&0&0&0&0&0&0&0&0&0&1&1&0&0&0&0&1&1&0&0\\
(2,3)&0&0&1&1&0&0&0&0&1&1&0&0&0&0&0&0&0&0&0&0&0&0&1&1\\
(2,4)&0&0&0&0&1&1&0&0&0&0&1&1&0&0&0&0&1&1&0&0&0&0&0&0\\
(3,1)&0&0&0&0&0&0&0&0&1&0&1&0&0&0&1&0&1&0&0&0&1&0&1&0\\
(3,2)&0&0&1&0&1&0&0&0&0&0&0&0&1&0&0&0&0&1&1&0&0&0&0&1\\
(3,3)&1&0&0&0&0&1&1&0&0&0&0&1&0&0&0&0&0&0&0&1&0&1&0&0\\
(3,4)&0&1&0&1&0&0&0&1&0&1&0&0&0&1&0&1&0&0&0&0&0&0&0&0
\end{array}
\end{equation}

Let $x(\sigma) \in {\mathbb N}=\{0,1,\dots\}$ be the frequency of voters choosing a vote $\sigma \in S_{n,r}$ and let $\Bx=\{ x(\sigma)\mid \sigma\in S_{n,r}\}$ be the vector of frequencies.
Then $\Bt = A_{n,r}\Bx$ is the sufficient statistic vector.
For a given $\Bt$, ${\cal F}_\Bt=\{ \Bx\in {\mathbb N}^{|S_{n,r}|} \mid A\Bx = \Bt\}$ is the $\Bt$-fiber.

Let $K$ be any field and let $K[\{p(\sigma)$, $\sigma\in S_{n,r}\}]$ be the polynomial ring in the indeterminates  $p(\sigma), \sigma\in S_{n,r}$.
Similarly let $K[\{\psi_{jk}, j\in [r], k\in [n]\}]$ be the polynomial ring in the indeterminates $\psi_{jk}, j\in [r], k\in [n]$.
Let 
\[\pi_{n,r}: K[\{p(\sigma)\mid \sigma\in S_{n,r}\}] \rightarrow 
K[\{\psi_{jk}, j\in [r], k\in [n]\}]
\]
be a homomorphism defined by
\[
\pi_{n,r}: p(\sigma) \mapsto \prod_{j=1}^r \psi_{j \sigma(j)}.
\]
Then the toric ideal $I_A=I_{A_{n,r}}$ for the $(n,r)$-Birkhoff model is the kernel of $\pi_{n,r}$. 
Moves for $A_{n,r}$ are the elements of the integer kernel $\kerz A_{n,r}=\{ \Bz\in {\mathbb Z}^{|S_{n,r}|} \mid A\Bz=0\}$ of $A_{n,r}$.

Note that if a voter ranks $r=n-1$ most preferred candidates, then he/she automatically ranks the last candidate. 
It is easy to see that the configuration matrix $A_{n,n-1}$  for the $(n,n-1)$-Birkhoff model and the  configuration matrix $A_{n,n}$ for the Birkhoff model have the same number of columns and their integer kernels are the same: $\kerz A_{n,n-1}=\kerz A_{n,n}$.

\section{Main result and its proof}
\label{sec:main}
The main result of this paper is the following theorem.

\begin{theorem}
\label{thm:main}
For $r\ge 2$ and $n\ge 3$, the toric ideal $I_A$ for the $(n,r)$-Birkhoff model is generated by
binomials of degree two and three. 
\end{theorem}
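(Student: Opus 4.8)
The plan is to recast the degree bound as a statement about connectivity of fibers, and then to reduce the difference between two data vectors by moves of degree two and three, following the scheme of Jacobson and Matthews for Latin squares. By the fundamental theorem relating Markov bases and toric ideals (see \cite{aht-book}), the assertion that $I_A$ is generated by binomials of degree at most three is equivalent to the following combinatorial statement: for every sufficient statistic $\Bt$ and every pair $\Bx,\Bx'\in\mathcal{F}_\Bt$, there is a sequence $\Bx=\Bx_0,\dots,\Bx_L=\Bx'$ lying entirely in $\mathcal{F}_\Bt$ (so each $\Bx_i\in{\mathbb N}^{|S_{n,r}|}$) with every increment $\Bx_{i+1}-\Bx_i\in\kerz A_{n,r}$ a move of degree two or three. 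I would establish this connectivity.

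Next I would describe the low-degree moves explicitly. Since each column of $A_{n,r}$ restricts, in every position block $j\in[r]$, to the single unit vector supported at $(j,\sigma(j))$, a degree-two move must preserve, for each $j$, the multiset of $j$-th candidates of the two votes it involves. Hence every degree-two move is a \emph{position-set swap}: pick two votes $\sigma,\tau$ and a subset $J\subseteq[r]$, and exchange the candidates of $\sigma$ and $\tau$ at the positions in $J$, subject only to the requirement that the two resulting maps are again injections. Likewise, a degree-three move permutes, position by position, the candidates of three votes. These are the algebraic analogues of the intercalate swaps and of the ``improper'' repair steps of \cite{jacobson-matthews}.

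I would then run a discrepancy-reduction argument. Measuring the difference by $\tfrac12\|\Bx-\Bx'\|_1$, I would show that whenever $\Bx\ne\Bx'$ there is a position $j$ at which the two assignments of candidates disagree, and I would try to remove this disagreement by swapping the $j$-th candidates of two votes present in the current vector, chosen so that the new collection of votes agrees with $\Bx'$ in one more place, thereby strictly decreasing the discrepancy. The main obstacle is that this swap may violate the injection constraint: bringing the required candidate into position $j$ of a vote could duplicate a candidate that this vote already uses in another position, which is precisely the configuration in which the desired intercalate is blocked in the Latin-square setting. I would resolve such a collision by enlarging the swap so that it also involves a third vote carrying the offending candidate, obtaining a degree-three move that performs the correction while keeping every intermediate vector in $\mathcal{F}_\Bt$. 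Verifying that for $n\ge3$ and $r\ge2$ a suitable third vote is always available, that the repair can be organized to have degree at most three, and that the discrepancy eventually reaches zero, is the technical heart of the argument, and is exactly where the reasoning of \cite{jacobson-matthews} must be adapted to the $(n,r)$-Birkhoff configuration.
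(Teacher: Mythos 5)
Your overall scheme---recasting the degree bound as fiber connectivity, identifying degree-two moves as position-set swaps between two votes and degree-three moves as position-wise permutations among three votes, and then reducing a discrepancy in the style of Jacobson--Matthews---matches the paper's strategy. However, there is a genuine gap at exactly the point you defer to "the technical heart": the claim that a collision created by a repair swap can be fixed by enlarging the swap to one third vote while \emph{keeping every intermediate vector in} $\mathcal{F}_\Bt$. The difficulty is that this repair cascades: bringing the offending candidate into the third vote may collide with a candidate that vote already carries, forcing a fourth vote, and so on, so nothing confines the repair to three votes. The paper controls cascades that stay within \emph{two} votes by chain swaps (Lemma \ref{lem:collision lemma}), whose net effect is still a degree-two move; but cross-vote cascades are avoided by a mechanism your proposal omits entirely: the intermediate states are allowed to leave the fiber. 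One introduces improper votes containing a single $-1$ entry (the improper element $b+c-a$), and proves that proper datasets can be connected by swap operations among two votes passing through such improper datasets (Lemmas \ref{lem:proper transposition lemma}--\ref{lem:extended-2}).

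The second missing idea is how the degree-$\le 3$ bound is then recovered, since the walk \eqref{eq:all move} through improper states is \emph{not} a concatenation of degree-three moves: for instance, Case 3-2-1 of Lemma \ref{lem:improper transposition lemma} uses a segment $I\swapoperation{\{2,3\}}I \swapoperation{\{2,4\}} I \swapoperation{\{1,2\}} P$ touching four distinct votes. The bound comes from a patching argument: every improper intermediate $I_i$ contains a resolvable pair ($b+c-a$ sitting above an $a$ in the same position), each swap operation between consecutive improper datasets is arranged to be compatible with a common resolvable pair (Definition \ref{def:compatible swap}), and therefore each $I_i$ can be temporarily resolved to a proper dataset $P_i$ in such a way that consecutive patched proper datasets differ in at most three votes, as in \eqref{eq:partial move} and \eqref{eq:partial move 2}. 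Without the improper states and this compatibility bookkeeping, your greedy in-fiber repair has no reason to terminate at degree three; note also (Section \ref{subsec:negative}) that allowing a $-1$ entry in a dataset is not the same as allowing a $-1$ coordinate in a fiber element, so the improper detour cannot be reinterpreted as connectivity of an enlarged fiber and must be handled by the patching argument.
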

For $r=1$ or $r=n=2$, the toric ideal $I_A$ is trivial.
For $r\ge 2$ and $n\ge 3$, any set of generators for $I_A$ contains a binomial of degree three. 
In the terminology of algebraic statistics, Theorem \ref{thm:main} states that the Markov degree of the $(n,r)$-Birkhoff model is three for $r\ge 2$ and $n\ge 3$.

The rest of this section is devoted to a proof of this theorem.
We define some notation and terminology for our proof, mainly following \cite{jacobson-matthews}.
Candidates are denoted either by letters $a,b,c,\ldots$ or by numbers $1,\dots,n$.
The set of $n$ candidates is denoted by $[n]$, using numbers.

First we give the definition for ``valid'' votes and definitions for two kinds of ``invalid votes''.
Our proof will be based on the idea of swapping candidates between two votes.
\begin{definition}
\label{def:proper}
An $r\times n$ integer matrix $V=(v_{jk})$ is a {\em proper vote} if $v_{jk} \in \{ 0,1 \} , \forall j,k$, every row sum is one, and every column sum is zero or one.
A {\em proper dataset} of $N$ votes is the multiset of $N$ proper votes.
\end{definition}
\begin{definition}
\label{def:improper}
An $r\times n$ integer matrix $V=(v_{jk})$ is an {\em improper vote} if every row sum is one, every column sum is zero or one, and
there exists a unique cell $(j^* , k^*) \in [r] \times [n]$ such that
\[
v_{j^* k^*}=-1, \quad v_{j k} \in \{ 0, 1 \} , \ \forall (j, k) \neq (j^* , k^*).
\]
An {\em improper dataset} of $N$ votes is the multiset of $r\times n$ integer matrices $V^{(1)}=(v_{jk}^{(1)}), \ldots, V^{(N)}=(v_{jk}^{(N)})$ such that one of them is an improper vote, the others are proper votes, and $\sum_{i=1}^N v_{jk}^{(i)} \geq 0, \forall j,k$. 
\end{definition}
\begin{definition}
\label{def:collision}
An $r\times n$ integer matrix $V=(v_{jk})$ is a {\em vote with collision} if $v_{jk} \in \{ 0,1 \} , \forall j,k$, every row sum is one and there exists a unique candidate $k^* \in [n]$ such that 
\[
\sum_{j=1}^r v_{j k^*}=2, \quad \sum_{j=1}^r v_{j k} \in \{ 0, 1 \} , \ \forall k\neq k^*.
\]
In this case we also say that the vote $V$ contains a {\em collision} or the candidate $k^*$ {\em collides} in $V$.
\end{definition}
\begin{definition}
\label{def:imp_col}
An $r\times n$ integer matrix $V=(v_{jk})$ is an {\em improper vote with collision} if every row sum is one, there exists a unique cell $(j^* , k^*) \in [r] \times [n]$ such that
\[
v_{j^* k^*}=-1, \quad v_{j k} \in \{ 0, 1 \} , \ \forall (j, k) \neq (j^* , k^*),
\]
and there exists a unique candidate $k^{**} \in [n]$ such that 
\[
\sum_{j=1}^r v_{j k^{**}}=2, \quad \sum_{j=1}^r v_{j k} \in \{ 0, 1 \} , \ \forall k\neq k^{**}.
\]
\end{definition}
We call a multiset $D$ of $r\times n$ integer matrices a {\em dataset} if each matrix in $D$ is one of the votes defined in Definitions \ref{def:proper}--\ref{def:imp_col}.

As in Section \ref{sec:intro},  we often denote votes by row vectors.
For proper votes and votes with collision, we denote them by $r$-dimensional row vectors whose $j$-th entry is the candidate ranked in the $j$-th position for each $j\in [r]$.
For improper votes we define their row vector representation as follows.
Let $V=(v_{jk})$ be an improper vote with $v_{j^* k} = v_{j^* k^{\prime}}=1, v_{j^* k^{\prime \prime}}=-1$.
We denote $V$ by an $r$-dimensional row vector whose $j$-th entry is the candidate ranked in the $j$-th position for each $j \in [r]$ with $j \neq j^*$ and the $j^*$-th entry is $k + k^{\prime} - k^{\prime \prime}$.
Here, $k + k^{\prime} - k^{\prime \prime}$ is just a symbol and we call it an {\em improper element}.
The following vectors are examples of a proper vote, a vote with collision and an improper vote, respectively:
\[
(b, a, c, d), \quad (b, a, c, a), \quad (a, d, a, b+c-a).
\]
We also define the row vector representation for improper votes with collision in the similar manner.

Several kinds of datasets were defined as the multiset of integer matrices above.
For these datasets we use their matrix representation.
The matrix representation $\bar{D}$ for a dataset $D=\{ V^{(1)}, \ldots, V^{(N)} \}$ is an $N \times r$ matrix whose $i$-th row is the row vector representation of vote $V^{(i)}$ for each $i\in [N]$.
Although the order of the rows of $\bar{D}$ are arbitrary, this matrix representation is convenient for our proof.
When there is no confusion, $\bar{D}$ is also called a dataset.
Latin squares are also of this form.
They are tables with $N=n=r$ such that each candidate appears exactly once in each row and column.
An example of improper dataset $I$ and its matrix representation $\bar{I}$ is as follows:
\begin{eqnarray*}
\small
I=\left\{
\left[
\begin{tabular}{ccc}
$0$&$0$&$1$\\
$0$&$0$&$1$\\
$1$&$1$&$-1$
\end{tabular}
\right],
\left[
\begin{tabular}{ccc}
$0$&$1$&$0$\\
$1$&$0$&$0$\\
$0$&$0$&$1$
\end{tabular}
\right],
\left[
\begin{tabular}{ccc}
$1$&$0$&$0$\\
$0$&$1$&$0$\\
$0$&$0$&$1$
\end{tabular}
\right]
\right\},
\quad
\bar{I}=\left[
\begin{tabular}{ccc}
$c$&$c$&$a+b-c$\\
$b$&$a$&$c$\\
$a$&$b$&$c$
\end{tabular}
\right] .
\end{eqnarray*}
In the following, when we display a dataset, we mainly use its matrix representation.

We now introduce some operations for datasets. 
Let $D=\{ V^{(1)},...,V^{(N)}\}$ be a dataset. 
Consider a pair of distinct votes in $D$, say $V^{(i_1)}=(\nu_{jk}^{(i_1)})$ and $V^{(i_2)}=(\nu_{jk}^{(i_2)})$. 
A {\em swap} $\{i_1,i_2 \} : k_1 \swapj{j^*} k_2$ for $D$ is an operation transforming $D$ into $D^{\prime}$, where
\begin{eqnarray*}
&& D^{\prime} = ( D \setminus \{ V^{(i_1)} , V^{(i_2)} \} )
\cup \{ \tilde{V}^{(i_1)}, \tilde{V}^{(i_2)} \},  \quad
\tilde{V}^{(i_1)}=(\tilde{v}^{(i_1)}_{jk}), \tilde{V}^{(i_2)}=(\tilde{v}^{(i_2)}_{jk}) ,
\\
&& \tilde{v}^{(i)}_{jk}=
\begin{cases} 
v^{(i)}_{jk}+1,\,\, & \quad (i,j,k)=(i_1, j^*, k_2), (i_2, j^*, k_1), \\
v^{(i)}_{jk}-1,\,\, & \quad (i,j,k)=(i_1, j^*, k_1), (i_2, j^*, k_2), \\
v^{(i)}_{jk},  \,\, & \quad \text{otherwise}. 
\end{cases}
\end{eqnarray*}
In general, $D^{\prime}$ is not a dataset, because $\tilde{V}^{(i)}, i=i_1,i_2$,  may not be a vote defined above.  
If $D^{\prime}$ is also a dataset, the swap is called an {\em applicable swap}. 
Since only applicable swaps appear in our proof, applicable swaps are called merely swaps, hereafter. 
The swap operation does not alter the sufficient statistic of the dataset.
Note that the swap may cause a new collision or a new improper element in $\tilde{V}^{(i)}, i=i_1,i_2$.

The matrix representation of datasets helps intuitive understanding and manipulation of the swap operation. 
The definition of the swap operation shows that the $j^*$-th row of $\tilde{V}^{(i_1)}$ is the sum of the $j^*$-th row of $V^{(i_1)}$ and a row vector with $0$ entries except $-1$ for the $k_1$-th entry and $1$ for the $k_2$-th entry. 
Similarly, the $j^*$-th row of $\tilde{V}^{(i_2)}$ is the sum of the $j^*$-th row of $V^{(i_2)}$ and a row vector with $0$ entries ~~~~except $1$ for the $k_1$-th entry and $-1$ for the $k_2$-th entry. 
Hence, the matrix representation $\bar{D}^{\prime}$ of $D^{\prime}$ is the sum of the matrix representation $\bar{D}$ of $D$ and the matrix with $0$ entries except $k_2-k_1$ for the $(i_1, j^*)$-entry and $k_1-k_2$ for the $(i_2, j^*)$-entry where $k_2-k_1$ and $k_1-k_2$ are symbols.

For illustration, we show an example of swap in the matrix representation of datasets.
Let $\bar{P} = (p_{ij})$ be a proper dataset in its matrix representation with $p_{11}=a, p_{21}=b$, $a\neq b$ and consider a swap $\{1,2 \} : a \swapj{1} b$.
By adding
\[
\left[
\begin{tabular}{c}
$b-a$\\
$a-b$
\end{tabular}
\right]
\]
to the submatrix $\twoelementcolumn{a}{b}$ we interchange two candidates $a$ and $b$ as
\begin{equation}\label{eq:swap}
\left[
\begin{tabular}{ccccc}
$a$&$*$&$\cdots$&$*$\\
$b$&$*$&$\cdots$&$*$\\
$*$&$*$&$\cdots$&$*$\\
$\vdots$&$\vdots$&$\vdots$&$\vdots$\\
$*$&$*$&$\cdots$&$*$\\
\end{tabular}
\right]
+ 
\left[
\begin{tabular}{ccccc}
$b-a$&0&$\cdots$&0\\
$a-b$&0&$\cdots$&0\\
0&0&$\cdots$&0\\
$\vdots$&$\vdots$&$\vdots$&$\vdots$\\
0&0&$\cdots$&0\\
\end{tabular}
\right]=
\left[
\begin{tabular}{ccccc}
$b$&$*$&$\cdots$&$*$\\
$a$&$*$&$\cdots$&$*$\\
$*$&$*$&$\cdots$&$*$\\
$\vdots$&$\vdots$&$\vdots$&$\vdots$\\
$*$&$*$&$\cdots$&$*$\\
\end{tabular}
\right],
\end{equation}
where candidates denoted by $*$ are not changed.  
In this case $a$ and $b$ may collide after the swap.
In order to simplify the notation, we sometimes denote the swap by $a \swapj{j} b$ or $a\swap b$.

Let us discuss a sequence of swaps. 
Consider swapping $a$ and $b$ in two different positions $j, j^{\prime}$ in the same $i$-th and $i^{\prime}$-th votes.
In our proof below, we often perform these two swaps sequentially, i.e., 
we swap $a$ and $b$ in the $j$-th position first and then in the $j^{\prime}$-th position. 
We denote this operation as
\[
a\swapj{j}b \swapj{j^{\prime}}a \qquad  \text{or} \qquad \{i,i^{\prime} \}: a\swapj{j}b \swapj{j^{\prime}}a
\]
and call this a {\em double swap}.
The double swap corresponds to the basic move for no three-factor interaction model
(cf.\ \cite{aoki-takemura-anz}). 
As an example, a double swap $a\swapj{1}b \swapj{2}a$, where the second swap causes an improper element, is written as 
\begin{equation}\label{swap}
\left[
\begin{tabular}{cc}
$a$&$b$\\
$b$&$c$
\end{tabular}
\right]
+ 
\left[
\begin{tabular}{cc}
$b-a$&$a-b$\\
$a-b$&$b-a$
\end{tabular}
\right]
=
\left[
\begin{tabular}{cc}
$b$&$ a$\\
$a$&$b+c-a$
\end{tabular}
\right].
\end{equation}

More generally, we consider a sequence of $m$ swaps in positions $j_1, \dots, j_m$, such that two consecutive swaps involve a common candidate,  and denote it as
\begin{equation}
\label{eq:chain-swap}
a_1\swapj{j_1} a_2 \swapj{j_2} \cdots \swapj{j_{m-1}} a_m \swapj{j_m} a_{m+1}
\end{equation}
or indicating the votes as
\begin{equation}
\label{eq:chain-swap1}
\{i,i'\}: a_1\swapj{j_1} a_2 \swapj{j_2} \cdots \swapj{j_{m-1}} a_m \swapj{j_m} a_{m+1} .
\end{equation}
We call \eqref{eq:chain-swap} (or \eqref{eq:chain-swap1}) a {\em chain swap} of length $m$ (even when $a_1=a_{m+1}$, i.e., we do not make a distinction between a chain and a loop).
A chain swap of length one is just a swap.

Suppose that we perform several chain swaps for the same two votes and ignore the order of swaps. 
An even number of swaps on two proper elements at the same position results in no swap and an odd number swaps on two proper elements at the same position results in a single swap.

On the other hand, we need to be careful for swaps involving an improper element.
Let $b+c-a$ be an improper element in the $j$-th position in an improper dataset $\bar{I}$.  
Since the elements of the sufficient statistic of $I$ are assumed to be nonnegative, there is a vote of $\bar{I}$ containing $a$ in the same position as $b+c-a$.  
If we make a swap $a\swap b$ between these two elements, then $b+c-a$ becomes $c$ and $a$ ~~becomes $b$:
\begin{equation}
\label{eq:resolve-improper}
\left[
\begin{tabular}{c}
$b+c-a$\\
$a$
\end{tabular}
\right]
+
\left[
\begin{tabular}{c}
$a-b$\\
$b-a$
\end{tabular}
\right]
=
\left[
\begin{tabular}{c}
$c$\\
$b$
\end{tabular}
\right].
\end{equation}
Similarly $a\swap c$ results in $\twoelementcolumn{b}{c}$.
Note that $\twoelementcolumn{c}{b}$ and $\twoelementcolumn{b}{c}$ are swaps of each other.
Hence the result of several swaps can be regarded as a single swap $a\swap b$ or $a\swap c$.
Although there is an ambiguity between $a\swap b$ or $a\swap c$, the result of a swap between these votes at the $j$-th position is either $\twoelementcolumn{b}{c}$ or $\twoelementcolumn{c}{b}$.

Furthermore we consider a swap between two candidates in $\twoelementcolumn{b+c-a}{d}$, $d\neq a,b,c$.
We allow $b\swap d$ or $c\swap d$ between these two elements.  
After $b\swap d$ we have
\begin{eqnarray}
\label{eq:compatible}
\left[
\begin{tabular}{c}
$b+c-a$\\
$d$
\end{tabular}
\right] \ \rightarrow \ 
\left[
\begin{tabular}{c}
$d+c-a$\\
$b$
\end{tabular}
\right]
\end{eqnarray}
and to $\twoelementcolumn{d+c-a}{b}$ we can make further swaps $c\swap b$ or $d\swap b$.  
The end result of several swaps is one of the following three cases
\[
\left[
\begin{tabular}{c}
$c+d-a$\\
$b$
\end{tabular}
\right],    \ 
\left[
\begin{tabular}{c}
$b+d-a$\\
$c$
\end{tabular}
\right] 
 \ \text{or} \ 
\left[
\begin{tabular}{c}
$b+c-a$\\
$d$
\end{tabular}
\right] .
\]
These three cases correspond to single swaps  $b\swap d$, $c\swap d$ and to no swap to $\twoelementcolumn{b+c-a}{d}$.  

Although there is an ambiguity on the result of chain swaps involving an improper element, the end result of several chain swaps is a set of simultaneous swaps of a subset of positions among the two votes.
We call this a swap operation for a subset of positions among two votes, or simply a {\em swap operation among two votes}.  
When we apply a swap operation to a proper or an improper dataset $D$ for a subset $J$ of positions among two votes $R=\{i,i^{\prime}\}$ and the result is $D^{\prime}$, we denote the operation by a long double sided arrow:
\[
D \swapoperation{R} D^{\prime},
\]
where we omit $J$, because it is often cumbersome to specify $J$.
In this notation we denote a proper dataset by $P$ and an improper dataset by $I$, when we want to clarify the kinds of datasets, instead of $D$.

We now give a proof of Theorem \ref{thm:main} in a series of lemmas.
Let $P$ and $P^{\prime}$ be two proper datasets with the same sufficient statistic.
Our strategy for a proof is to perform swap operations to $P$, involving at most three votes of $P$ at each step, to increase the number of the common elements in $\bar{P}$ and $\bar{P}^{\prime}$.
In each operation, elements at the same position of the three votes of $\bar{P}$ are permuted.
This corresponds to a move of degree at most three.
In fact,  each operation will be further decomposed into a series of swap operations among two votes, which involve intermediate improper datasets.

For the $i$-th vote of $\bar{P}$ and the $i^{\prime}$-th vote of $\bar{P}^{\prime}$
\[
(p_{i1},\dots, p_{ir}),  \quad (p^{\prime}_{i^{\prime}1},\dots, p^{\prime}_{i^{\prime}r}),
\]
let
\begin{equation}\label{eq:coincidences}
C=C_{i,i^{\prime}}=|\{ j \mid p_{ij}=p^{\prime}_{i^{\prime}j}\}|
\end{equation}
be the number of the same candidates in the same positions in these two votes. 
We call $C$ the number of {\em concurrences}.
If $C=r$, then we can remove these two votes from $\bar{P}$ and $\bar{P}^{\prime}$ and consider other $N-1$ votes. 
On the other hand, we will show that, if $C<r$ then we can always increase $C$ by a series of swap operations involving at most three votes of $\bar{P}$.
The $i$-th vote of $\bar{P}$ will eventually coincide with the $i^{\prime}$-th vote of $\bar{P}^{\prime}$.
Then, Theorem \ref{thm:main} is proved by induction on $N$.

Our first lemma concerns resolving collisions.

\begin{lemma}
\label{lem:collision lemma}
Let $\bar{D}$ be a dataset without any improper element and suppose that at least one of the $i$-th and $i^{\prime}$-th votes contains a collision.
If each candidate appears at most twice in these two votes in total, we can resolve all the collisions by a swap operation among these two votes.
\end{lemma}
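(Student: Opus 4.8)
The plan is to reduce the existence of a suitable swap operation to a graph-colouring problem and then to solve it by showing that the relevant graph is bipartite. Write the two votes as $(u_1,\dots,u_r)$ (the $i$-th vote) and $(w_1,\dots,w_r)$ (the $i'$-th vote); since $\bar D$ has no improper element, every $u_j$ and $w_j$ is a genuine candidate. A swap operation among these two votes is specified by a subset $J\subseteq[r]$ of positions to be swapped simultaneously, after which the $i$-th vote carries $w_j$ in position $j$ for $j\in J$ and $u_j$ otherwise, and the $i'$-th vote carries $u_j$ for $j\in J$ and $w_j$ otherwise. The goal is to choose $J$ so that both resulting votes are proper. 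Since the two votes together use each candidate at most twice, a resulting vote fails to be proper exactly when some candidate occurring twice in total has both of its occurrences land in the same vote; hence it suffices to choose $J$ so that, for every candidate occurring twice, its two occurrences are separated into the two different votes.

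To organise this, I would introduce the $2r$ \emph{slots} $(i,j)$ and $(i',j)$, $j\in[r]$, each carrying the candidate ($u_j$ or $w_j$) sitting there, and build a graph on the slots with two kinds of edges: a \emph{column edge} joining $(i,j)$ and $(i',j)$ for each $j$, and a \emph{candidate edge} joining the two slots occupied by each candidate that occurs twice. The hypothesis that every candidate appears at most twice is precisely what guarantees that each slot lies on at most one candidate edge; together with the single column edge through each slot, this forces the graph to have maximum degree two. A choice of $J$ corresponds exactly to a $2$-colouring of the slots by the two votes in which the endpoints of every column edge differ (one slot of each column must go to each vote) and the endpoints of every candidate edge differ (the two occurrences are separated). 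Thus resolving all collisions is equivalent to properly $2$-colouring this slot graph.

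A graph of maximum degree two is a disjoint union of paths and cycles, and such a graph is bipartite if and only if all of its cycles have even length; so the crux, and the step I expect to be the main obstacle, is to rule out odd cycles. Here I would use that each slot meets exactly one column edge and at most one candidate edge: in any cycle every vertex has degree two and therefore meets exactly one column edge and one candidate edge, so the edges around the cycle alternate strictly between the two kinds and the cycle has even length. Consequently the slot graph is bipartite and admits the required $2$-colouring; reading off $J$ from this colouring (say $j\in J$ iff the slot $(i,j)$ is coloured $i'$) and applying the corresponding swap operation separates the two occurrences of every doubly occurring candidate, so both votes become proper. Finally I would note that no improper element can be created, since all entries involved are genuine candidates and a swap merely permutes them, so the result is again a dataset and all collisions have been resolved.
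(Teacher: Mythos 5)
Your proof is correct, and it takes a genuinely different route from the paper's. First, on correctness of interpretation: the paper defines a swap operation among two votes as ``a set of simultaneous swaps of a subset of positions among the two votes,'' so exhibiting a subset $J\subseteq[r]$ whose simultaneous swap leaves both votes proper is exactly what the lemma demands, and your bipartiteness argument does exhibit it. The paper instead argues sequentially: it swaps one occurrence of a colliding candidate into the other vote, chases the new collision this may create by further swaps, and proves termination of this chain by showing (by contradiction) that no candidate is ever swapped twice; several simultaneous collisions are then handled by a temporary relabeling trick and induction on the number of collisions. You obtain the whole subset $J$ at once from a proper $2$-coloring of your slot graph, which exists because every slot lies on exactly one column edge and at most one candidate edge, forcing every cycle to alternate edge types and hence have even length. (One small point to make explicit: a candidate occupying the same position in both votes yields a candidate edge parallel to a column edge, so your graph is a multigraph; the resulting $2$-cycle is even, so the argument still goes through.) Both proofs turn on the same hypothesis---each candidate appears at most twice in the two votes---used by you as ``maximum degree two'' and by the paper as ``no candidate repeats along the chain.'' What your approach buys is brevity and structural clarity: it is essentially the even-cycle decomposition underlying the normality invoked in the Remark after the lemma, and your slot graph, with column edges contracted, is precisely the graph $G_M$ on positions that the paper itself introduces in Section~\ref{sec:structure-two} to analyze two-vote fibers. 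What the paper's sequential approach buys is reusability: the same chain-swap termination argument is re-run, nearly verbatim, in Lemmas \ref{lem:extended-1} and \ref{lem:extended-2}, where an improper element $b+c-a$ is present; there one slot effectively carries $b$, $c$ and $-a$, swaps at that position are constrained and ambiguous, and a static $2$-coloring argument no longer applies---which is exactly why the authors give a chain-swap proof here rather than relying on normality-type reasoning.
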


\begin{remark}
We can prove this lemma based on the normality of the semigroup generated by the configuration matrix $A_{n,r}$ such as $A_{4,3}$ in \eqref{nr=43}.  
The normality follows from results in \cite{ohsugi-hibi-2001}, \cite{sullivant-2006} and \cite{brualdi-dahl}.  
We will discuss this point again in Section \ref{subsec:normality}.
However, we give our own proof of Lemma \ref{lem:collision lemma}, because we will use similar arguments for improper datasets. Arguments based on the normality cannot be applied to improper datasets.
\end{remark}

\begin{proof}
We may assume $i=1$ and $i^{\prime}=2$ and at least the first vote contains a collision.
We first consider the case that there is only one collision in the two votes.
Let $a$ denote the colliding candidate.
Relabeling the positions, without loss of generality, the two votes are displayed as
\[
\left[\begin{tabular}{cccccc}
$a$ & $a$ & $d$ & $*$ & $\cdots$ & $*$\\
$b$ & $c$ & $*$ & $*$ & $\cdots$ & $*$
\end{tabular}
\right],
\]
where $b\neq c$. 
We choose one of the two $a$'s arbitrarily, say in the second position, and make a swap $a\swapj{2}c$ with the following result:
\begin{equation}
\label{eq:send-step-a}
\left[\begin{tabular}{cccccc}
$a$& $c$ & $d$ & $*$ & $\ldots$ & $*$\\
$b$  & $a$ &  $*$  & $*$& $\ldots$ & $*$
\end{tabular}
\right].
\end{equation}
Since $a$ appears at most twice in these two votes in total, $a$ does not collide in the second vote.
However, $c$ might again collide in the first vote, e.g.,
\[
\left[\begin{tabular}{ccccccc}
$a$ & $c$ & $d$ & $c$ & $*$ & $\ldots$ & $*$
\end{tabular}
\right] .
\]
We then make a  swap for $c$, which was in the first vote from the beginning (in this example $c\swapj{4}*$).
If we continue this process, we always have collisions in the first vote.
If this process ends in finite number of steps, then by a chain swap we resolve the collisions of $a$ and subsequent collisions due to swaps.
We claim that this process indeed ends in finite number of steps.
Actually we show a stronger result that no candidate appears twice in this process of resolving collisions.

Suppose otherwise.  
Then there is a candidate, say $\alpha$, which is swapped twice for the first time. 
We consider two cases $\alpha=a$ and $\alpha \neq a$.

Consider the case $\alpha=a$.
The process of swaps is displayed as follows:
\[
a\leftrightarrow c \leftrightarrow s_1  \leftrightarrow \cdots \leftrightarrow s_{l-1}  \leftrightarrow a
\leftrightarrow \cdots \ .
\]
Since the collision always occurs in the first vote, the candidate $a$ was moved from the second vote to the first vote in the swap $s_{l-1}  \leftrightarrow a$.
By \eqref{eq:send-step-a} we have $c=s_{l-1}$, which contradicts the assumption that $\alpha=a$ is the first candidate colliding twice.

Consider the case $\alpha\neq a$.
The process of swaps is displayed as follows:
\begin{eqnarray} \label{eq:case_alpha_different}
a\leftrightarrow c \leftrightarrow s_1  \leftrightarrow \cdots \leftrightarrow s_{l-1} \leftrightarrow \alpha
\leftrightarrow s_{l+1} \leftrightarrow \cdots \leftrightarrow s_{m-1} \leftrightarrow \alpha \leftrightarrow \cdots \ .
\end{eqnarray}
Considering the subprocess of \eq{eq:case_alpha_different} which starts from the first $\alpha$, we can apply the discussion for the $\alpha =a$ and confirm that there exists a contradiction.
We have shown the lemma for the case that there is only one collision.

Now suppose that there are $m$ colliding candidates $a_1,a_2,\ldots,a_m$.
Each of these candidates appears in one of the votes twice.  
Temporarily, we assign different labels, say $a_l^{\prime}, a_l^{\prime \prime}$, $l=2,\ldots,m$, to candidates except for $a_1$, namely, we ignore collisions of $a_2,\ldots,a_m$.
By the above procedure we resolve the collision of $a_1$ and subsequent collisions.
When this procedure is finished, we restore the labels  $a_l^{\prime}, a_l^{\prime \prime} \rightarrow a_l$, $l=2,\dots,m$.
Then some collisions of $a_2,\dots,a_m$ may have been already resolved, but we do not have any new collision.  
Hence, by the above procedure we decrease the number of collisions.
As long as there is a remaining collision, we can repeat this procedure and resolve all the collisions.
\end{proof}

So far we discussed resolving collisions.  
We now consider resolving an improper element by a swap operation among two votes.  
\begin{lemma}\label{lem:getting proper lemma}
Let $\bar{I}$ be an improper dataset containing an element $b+c-a$. 
By a swap operation among two votes, $\bar{I}$ can be transformed to a proper dataset.
\end{lemma}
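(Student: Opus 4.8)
The plan is to resolve the single improper element with one swap that converts $b+c-a$ into a proper element, and then to clear away the collisions this may create by appealing to Lemma~\ref{lem:collision lemma}. Since every step is carried out between the same pair of votes, the composite is again a swap operation among two votes, exactly as the statement requires.

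First I would choose the partner vote. Suppose the improper element $b+c-a$ lies in position $j^*$ of the (unique) improper vote, so that in the matrix representation its $j^*$-th row has $+1$ in columns $b$ and $c$ and $-1$ in column $a$, with $a,b,c$ pairwise distinct. Because the sufficient statistic of $\bar I$ is nonnegative and this vote contributes $-1$ to the $(j^*,a)$-entry, some other, necessarily proper, vote must carry candidate $a$ in position $j^*$; call these two votes $V^{(1)}$ and $V^{(2)}$. Applying the swap $\{1,2\}: a\swapj{j^*}b$ then turns $b+c-a$ into $c$ and turns $a$ into $b$, exactly as in \eqref{eq:resolve-improper}. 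After this single swap no improper element survives, and the only candidates that can newly collide are $a$ in $V^{(1)}$ and $b$ in $V^{(2)}$.

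It then remains to invoke Lemma~\ref{lem:collision lemma} on $V^{(1)}$ and $V^{(2)}$, and for this I must check its multiplicity hypothesis. The key is a counting argument using the column-sum constraint of the improper vote: the column of $a$ has sum $0$ or $1$, so together with its $-1$ entry candidate $a$ occurs at most twice as a genuine $+1$ entry; candidates $b$ and $c$ occur only in the improper row; and every other candidate occurs at most once. Combining this with $V^{(2)}$, which is proper and carries $a$ only in position $j^*$, a short bookkeeping shows that after the swap every candidate occurs at most twice in the two votes taken together. Hence Lemma~\ref{lem:collision lemma} applies and resolves all the remaining collisions by a further swap operation among $V^{(1)}$ and $V^{(2)}$, producing a proper dataset. (If the first swap created no collision, then $\bar I$ is already proper and there is nothing left to do.)

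The genuinely delicate point is exactly this multiplicity count: one must use the structure of the improper vote to exclude the possibility that resolving $b+c-a$ leaves some candidate appearing three or more times in the two votes, which would block the application of Lemma~\ref{lem:collision lemma}. Everything else is a direct composition of swaps within the single pair $V^{(1)}, V^{(2)}$.
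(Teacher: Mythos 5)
Your proposal is correct and follows essentially the same route as the paper: locate a proper vote carrying $a$ in the position of $b+c-a$ (guaranteed by nonnegativity of the sufficient statistic), apply the swap of \eqref{eq:resolve-improper}, and resolve the resulting collisions via Lemma~\ref{lem:collision lemma}. Your explicit column-sum bookkeeping verifying that every candidate appears at most twice in the two votes is a welcome elaboration of what the paper asserts in one line, but it is the same argument.
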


\begin{proof}
Without loss of generality, assume that the first vote contains $b+c-a$ and the second vote ~~contains $a$.
We can then make a swap $\{1,2\}:a\swap b$, as in \eqref{eq:resolve-improper}.
Here $a$ may collide in the first vote and $b$ may collide in the second vote.
However, both $a$ and $b$ appear at most twice in these two votes.
Hence we can now resolve these possible collisions by Lemma \ref{lem:collision lemma} by a swap operation among these two votes.
\end{proof}

The operation of Lemma \ref{lem:getting proper lemma} is denoted by
\begin{equation}
\label{eq:swap-operation-IP}
I \swapoperation{R} P,
\end{equation}
where $R$ is a set of two votes of $I$.

At this point we make the following two definitions.
\begin{definition}
\label{def:resolvable pair}
We call two votes in Lemma \ref{lem:getting proper lemma} of the form
\[
\begin{tabular}{c}
$i_{\rm im}$\\
$i_{\rm pr}$
\end{tabular}
\left[\begin{tabular}{ccccccc}
$*$& $\cdots$ & $*$ & $b+c-a$ & $*$ & $\cdots$ & $*$\\
$*$& $\cdots$ & $*$ & $a$     & $*$ & $\cdots$ & $*$\\
\end{tabular}
\right]
\]
a {\em resolvable pair}.  
Here $i_{\rm im}$ is an improper vote and $i_{\rm pr}$ is a proper vote.
A resolvable pair is denoted ~~~~~~~~as $[i_{\rm im},i_{\rm pr}]$.
\end{definition}
Note that any improper dataset $\bar{I}$ contains a resolvable pair $[i_{\rm im}, i_{\rm pr}]$ and $R$ in \eqref{eq:swap-operation-IP} is the set of votes of a resolvable pair.

\begin{definition}
\label{def:compatible swap} 
A swap operation among two votes $R=\{i,i'\}$ in $I \swapoperation{R} I^{\prime}$ is {\em compatible} with improper datasets $\bar{I}$ and $\bar{I}^{\prime}$ if there exists a common resolvable pair $[i_{\rm im}, i_{\rm pr}]$ of $\bar{I}$ and $\bar{I}^{\prime}$ such that $R \cap \{i_{\rm im}, i_{\rm pr}\} \neq \emptyset$, or equivalently $|R\cup \{i_{\rm im}, i_{\rm pr}\}|\le 3$.
\end{definition}

\begin{lemma}\label{lem:proper transposition lemma}
Let $P, P^{\prime}$ be two proper datasets with the same sufficient statistic.
Suppose that the $i$-th vote of $P$ and the $i^{\prime}$-th vote of $P^{\prime}$ are different, i.e., $V^{(i)} \neq V^{\prime (i^{\prime})}$, and let $C<r$ in (\ref{eq:coincidences}), be the number of concurrences in these two votes. 
Then, $C$ can be increased by at most three steps of swap operations among two votes of $P$, 
where 1) each intermediate swap operation between two consecutive improper datasets is compatible with them, and
2) if the resulting dataset is improper then its improper vote and the $i$-th vote form a resolvable pair.
\end{lemma}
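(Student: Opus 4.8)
The plan is to increase the concurrence count at a single mismatched position, drawing the required candidate from a third vote and then cleaning up with Lemma \ref{lem:collision lemma}. Since $C<r$ in \eqref{eq:coincidences}, fix a position $j^*$ with $p_{ij^*}=:\alpha\neq\beta:=p'_{i'j^*}$. Because $P$ and $P'$ have the same sufficient statistic, the number of votes of $\bar P$ with $\beta$ in position $j^*$ equals the corresponding count for $\bar P'$, which is at least one (the $i'$-th vote). Hence $\bar P$ contains a vote $i_2$ with $\beta$ in position $j^*$, and $i_2\neq i$ since $p_{ij^*}=\alpha\neq\beta$. The target move is the single swap $\{i,i_2\}:\alpha\swapj{j^*}\beta$, which places $\beta$ at position $j^*$ of vote $i$ and thereby creates one new concurrence with the $i'$-th vote of $P'$.

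A useful invariant throughout is that a swap between two votes only exchanges two entries in one column, so the total multiplicity of every candidate in the two involved votes is preserved; starting from two proper votes, every candidate therefore occurs at most twice in them. Consequently the hypothesis of Lemma \ref{lem:collision lemma} is automatically met, and any collision produced along the way can be resolved by a swap operation confined to those two votes. The target move creates at most two collisions: $\beta$ may now occur twice in vote $i$ (if it already sat at some position $j'$), and $\alpha$ may occur twice in $i_2$ (if it already sat there). When neither happens we are done in a single step with a proper dataset and $C$ increased. Otherwise I invoke Lemma \ref{lem:collision lemma} on $\{i,i_2\}$, choosing to keep the freshly placed copy of $\beta$ at $(i,j^*)$ fixed; this is legitimate because the collision-resolving chain never swaps the same candidate twice, so the copy of $\beta$ at $j^*$ is untouched and the new concurrence survives.

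The delicate point, and the reason the statement allows up to three steps and an improper outcome, is that the chain clearing these collisions alters other entries of vote $i$ and could overwrite a position that already agreed with the target, cancelling the gain. I would proceed by case analysis on whether the candidate that the clean-up must expel from vote $i$ equals a target candidate. When it does not, the clean-up stays inside $\{i,i_2\}$ and terminates with a proper dataset, so $C$ strictly increases in one two-vote operation. When it does, instead of completing the offending double swap I leave its second swap unfinished, producing an improper element of the form $b+c-a$ as in \eqref{swap}; by construction this improper vote and vote $i$ form a resolvable pair in the sense of Definition \ref{def:resolvable pair}, giving condition 2), while the operations linking the successive improper datasets are compatible in the sense of Definition \ref{def:compatible swap} because all activity stays within $\{i,i_2\}$ and, if a reservoir is needed to absorb the displaced $\alpha\swap\beta$ from $i_2$, one further vote $i_3$. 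This confinement to the three votes $i,i_2,i_3$ is exactly what bounds the procedure by three two-vote swap operations.

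The main obstacle is this bookkeeping: proving that one never needs a fourth vote and that the gain at $j^*$ is not silently undone elsewhere. I expect the heart of the argument to be the verification that, in the bad case, the improper configuration can always be arranged with its improper vote paired with vote $i$ while $(i,j^*)=\beta$ is held fixed, so that a later application of Lemma \ref{lem:getting proper lemma} restores properness without sacrificing the new concurrence. Everything else, namely the existence of $i_2$, the multiplicity invariant, and the applicability of Lemma \ref{lem:collision lemma}, should be routine once this case is isolated.
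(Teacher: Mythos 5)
Your proposal correctly locates the crux --- a cleanup chain on $\{i,i_2\}$ rewrites other entries of vote $i$ and can annihilate existing concurrences --- but it does not resolve it; the phrases ``I would proceed by case analysis'' and ``I expect the heart of the argument to be the verification that \ldots'' defer exactly the step that constitutes the proof. Concretely, two things go wrong. First, your dichotomy and its ``unfinished swap'' repair are discussed only for the collision of $\beta$ in vote $i$; the symmetric collision of $\alpha$ in vote $i_2$ is equally dangerous, because resolving it by Lemma \ref{lem:collision lemma} also swaps entries out of vote $i$. For example, with vote $i=(a,c,d)$, target vote $(b,c,e)$, and $i_2=(b,a,f)$ (completed to equal sufficient statistics by further votes), the swap $a\swapj{1}b$ forces a cleanup of the $a$-collision in $i_2$ that expels the concurrent $c$ from vote $i$, leaving $C$ unchanged. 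In that situation your trick cannot be used: the improper element would have to be created in vote $i$ itself, which violates conclusion 2) of the lemma (the improper vote must form a resolvable pair \emph{with} vote $i$). Second, the ``reservoir'' vote $i_3$ you invoke need not exist: a vote of $P$ free of $\alpha$ is guaranteed only when $\alpha$ does not occur in the target vote $i^{\prime}$, and with your arbitrary choice of the mismatched position $j^*$ this can fail.

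The paper avoids both problems by making the case split \emph{before} choosing which positions to swap. If some candidate occurs at different positions in vote $i$ and the target vote (Case 1), it attacks that candidate with a double swap $a\swapj{1}b\swapj{2}a$, which keeps vote $i$ proper by construction (no chain is ever started on vote $i$) and pushes the improper element into $i_2$, where the $-1$ even absorbs a possible second occurrence of $a$; this is the rigorous version of your ``leave the second swap unfinished.'' Otherwise (Case 2) every repeated candidate sits in the same position, so $\beta$ does not occur in vote $i$ (hence no collision in vote $i$ at all) and $\alpha$ does not occur in the target vote, which is precisely what guarantees your reservoir $i_3$; the paper then clears $\alpha$ out of $i_2$ into $i_3$ first, performs the clean swap $a\swapj{1}b$, and finishes with Lemma \ref{lem:getting proper lemma}. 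Your plan would become a proof only after reconstructing this case distinction and the accompanying verifications, which is the content it currently lacks.
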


\begin{proof}
Without loss of generality we consider the first votes of $P$ and $P^{\prime}$.
We consider two disjoint cases.
\begin{description}
\item[Case 1] The same candidate appears in distinct positions in the two votes.\\
Let $b$ be the candidate appearing in the distinct positions in the two votes.
Relabeling the positions, without loss of generality,  let $p_{11}=a$, $p^{\prime}_{11}=b$, $a\neq b$, and $p_{12}=b$. 
Since $P^{\prime}$ contains $b$ in the first position and the sufficient statistic of $P$ and $P^{\prime}$ are the same, $P$ has to contain $b$ in the first position, say $p_{21}=b$.
We now perform a double swap $a\swapj{1}b\swapj{2}a$ to $P$:
\[
\small
\left[
\begin{tabular}{ccccc}
$a$&$b$&$*$&$\cdots$&$*$\\
$b$&$c$&$*$&$\cdots$&$*$\\
$*$&$*$&$*$&$\cdots$&$*$\\
$\vdots$&$\vdots$&$\vdots$&$\vdots$&$\vdots$\\
$*$&$*$&$*$&$\cdots$&$*$\\
\end{tabular}
\right]
\rightarrow
\left[
\begin{tabular}{ccccc}
$b$&$a$&$*$&$\cdots$&$*$\\
$a$&$b+c-a$&$*$&$\cdots$&$*$\\
$*$&$*$&$*$&$\cdots$&$*$\\
$\vdots$&$\vdots$&$\vdots$&$\vdots$&$\vdots$\\
$*$&$*$&$*$&$\cdots$&$*$\\
\end{tabular}
\right], 
\]
where $*$'s are not changed.
By this double swap $C$ is increased. 
If $c=a$, the swap results in a proper dataset.
Otherwise, the swap results in an improper dataset, where $[2,1]$ forms a resolvable pair.
Therefore, $C$ is increased by a process of the form $P \swapoperation{\{ 1,2 \} } P$ or $P \swapoperation{\{ 1,2 \} } I$.

\item[Case 2] Every candidate appearing twice in the two votes appears in the same position.\\
Again let $p_{11}=a$, $p^{\prime}_{11}=b$, $a\neq b$.
The candidate $b$ does not appear in the first vote of $P$ and $a$ does not appear in the first vote of $P^{\prime}$. 
As in Case 1, we can assume $p_{21}=b$.
Since the first vote of $P^{\prime}$ does not contain $a$, the total frequency of the candidate $a$ in $P^{\prime}$ is less than $N$.  
Since the sufficient statistic is common, it follows that there is a vote of $P$ which does not contain $a$.

If the second vote does not contain $a$, we can make a swap $a\swapj{1} b$ among the first two votes and increase $C$ without causing collision.
This process is of the form $P \swapoperation{\{ 1,2 \} } P$.

If the second vote contains $a$, without loss of generality, let $p_{22}=a$ and also assume that the third vote of $P$ does not contain $a$. 
Let $p_{32}=c\neq a$. 
Since $a$ is chosen in the second vote and not chosen in the third vote and both votes have the same number $r$ of candidates, there is a candidate $d$, who is chosen in the third vote but is not chosen in the second vote.  
If $d$ is in the position $j>2$, then by relabeling of positions we assume that $p_{33}=d$.  
Then $\bar{P}$ looks like
\begin{eqnarray*}
\left[
\begin{tabular}{ccccc}
$a$&$*$&$*$&$\cdots$&$*$\\
$b$&$a$&$*$&$\cdots$&$*$\\
$d$&$c$&$*$&$\cdots$&$*$\\
$*$&$*$&$*$&$\cdots$&$*$\\
$\vdots$&$\vdots$&$\vdots$&$\cdots$&$\vdots$\\
$*$&$*$&$*$&$\cdots$&$*$\\
\end{tabular}
\right]
\ \mathrm{or}\ 
\left[
\begin{tabular}{ccccc}
$a$&$*$&$*$&$\cdots$&$*$\\
$b$&$a$&$*$&$\cdots$&$*$\\
$*$&$d(=c)$&$*$&$\cdots$&$*$\\
$*$&$*$&$*$&$\cdots$&$*$\\
$\vdots$&$\vdots$&$\vdots$&$\cdots$&$\vdots$\\
$*$&$*$&$*$&$\cdots$&$*$\\
\end{tabular}
\right]
\ \mathrm{or}\ 
\left[
\begin{tabular}{ccccc}
$a$&$*$&$*$&$\cdots$&$*$\\
$b$&$a$&$*$&$\cdots$&$*$\\
$*$&$c$&$d$&$\cdots$&$*$\\
$*$&$*$&$*$&$\cdots$&$*$\\
$\vdots$&$\vdots$&$\vdots$&$\cdots$&$\vdots$\\
$*$&$*$&$*$&$\cdots$&$*$\\
\end{tabular}
\right] .
\end{eqnarray*}
We perform a swap $\{2,3\}: a\swapj{2} d$ to the second position of the second and third votes:
\begin{eqnarray*}
\small
\left[
\begin{tabular}{ccccc}
$a$&$*$&$*$&$\cdots$&$*$\\
$b$&$d$&$*$&$\cdots$&$*$\\
$d$&$a+c-d$&$*$&$\cdots$&$*$\\
$*$&$*$&$*$&$\cdots$&$*$\\
$\vdots$&$\vdots$&$\vdots$&$\cdots$&$\vdots$\\
$*$&$*$&$*$&$\cdots$&$*$\\
\end{tabular}
\right]
\ \mathrm{or}\ 
\left[
\begin{tabular}{ccccc}
$a$&$*$&$*$&$\cdots$&$*$\\
$b$&$d$&$*$&$\cdots$&$*$\\
$*$&$a$&$*$&$\cdots$&$*$\\
$*$&$*$&$*$&$\cdots$&$*$\\
$\vdots$&$\vdots$&$\vdots$&$\cdots$&$\vdots$\\
$*$&$*$&$*$&$\cdots$&$*$\\
\end{tabular}
\right]
\ \mathrm{or}\ 
\left[
\begin{tabular}{ccccc}
$a$&$*$&$*$&$\cdots$&$*$\\
$b$&$d$&$*$&$\cdots$&$*$\\
$*$&$a+c-d$&$d$&$\cdots$&$*$\\
$*$&$*$&$*$&$\cdots$&$*$\\
$\vdots$&$\vdots$&$\vdots$&$\cdots$&$\vdots$\\
$*$&$*$&$*$&$\cdots$&$*$\\
\end{tabular}
\right] .
\end{eqnarray*}
After the swap the second vote does not contain $a$.
The result is proper if $c=d$ (the middle case) and improper if $c\neq d$.

Now we apply a swap $\{1,2\}: a\swapj{1}b$ for the first position of the first and the second votes and increase $V$.
In the case $c\neq d$, the last swap was performed on an improper dataset, but
it is compatible with the datasets.
Furthermore we can resolve the improper element $a+c-d$ by Lemma \ref{lem:getting proper lemma},  since $[3,2]$ is a resolvable pair.
The process in this case is summarized as $P \swapoperation{\{ 2,3 \} } P \swapoperation{\{ 1,2 \} } P$ or $P \swapoperation{\{ 2,3 \} } I \swapoperation{\{ 1,2 \} } I \swapoperation{\{ 2,3 \} } P$.
\end{description}
This proves the lemma.
\end{proof}

\begin{lemma}
\label{lem:improper transposition lemma}
Let $I$ be an improper dataset and $P^{\prime}$ be a proper dataset with the same sufficient statistic. 
Consider the $i^{\prime}$-th vote from $P^{\prime}$ and choose any resolvable pair $[i_{\rm im},i_{\rm pr}]$ of $I$.
Then by at most three swap operations among two votes of $I$,  we can 
1) increase the number of concurrences $C_{i_{\rm pr},i^{\prime}}$, 
or 2) make $I$ proper without changing the $i_{\rm pr}$-th vote of $I$. 
Furthermore, 
if the resulting dataset is improper then its improper vote and the $i_{\rm pr}$-th vote form a resolvable pair, 
and each intermediate swap operation between two consecutive 
improper datasets is compatible with them.
\end{lemma}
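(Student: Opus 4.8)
The plan is to mirror the proof of Lemma \ref{lem:proper transposition lemma}, now comparing the $i_{\rm pr}$-th vote $p=(p_1,\dots,p_r)$ of $I$ with the target $i'$-th vote $q=(q_1,\dots,q_r)$ of $P'$ and trying to raise $C_{i_{\rm pr},i'}$. After relabelling positions I assume the improper element sits in position $1$, so that the $i_{\rm im}$-th vote carries $b+c-a$ in position $1$ and, by Definition \ref{def:resolvable pair}, $p_1=a$ with $a,b,c$ distinct. The decisive new feature is that a dataset may carry \emph{at most one} improper element; hence, unlike in Lemma \ref{lem:proper transposition lemma}, there is no budget to manufacture a fresh improper element, and every swap operation must either preserve properness at the cell on which it acts or simultaneously consume the improper element already present. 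The freedom granted by \eqref{eq:resolve-improper}, namely that $b+c-a$ can be resolved so that $i_{\rm pr}$ receives \emph{either} $b$ or $c$ in position $1$ at our choice, is the tool that replaces the free double swap of Lemma \ref{lem:proper transposition lemma}.

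I would organise the argument by the value of $q_1$. If $q_1=a$, the two votes already agree at position $1$ and I aim for conclusion 2). Because $q_1=a$ forces $t_{1a}\ge1$ while the $i_{\rm im}$-th vote contributes $-1$ to $t_{1a}$, at least two proper votes of $I$ carry $a$ in position $1$; choosing one of them, $i''\ne i_{\rm pr}$, I resolve the improper element through the resolvable pair $[i_{\rm im},i'']$ by Lemma \ref{lem:getting proper lemma}. This single swap operation leaves the $i_{\rm pr}$-th vote untouched and yields a proper dataset, giving 2). If instead $q_1\in\{b,c\}$, I resolve $b+c-a$ through $[i_{\rm im},i_{\rm pr}]$ as in \eqref{eq:resolve-improper}, choosing the resolution that installs $q_1$ in position $1$ of $i_{\rm pr}$; this matches a previously unmatched cell, and the collisions this may create in the two votes are cleared by Lemma \ref{lem:collision lemma}. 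The remaining case $q_1\notin\{a,b,c\}$ is treated by running the Case 1 / Case 2 dichotomy of Lemma \ref{lem:proper transposition lemma} on $p$ versus $q$, with the proviso that the single step which in that lemma would create an improper element must here be routed through the pre-existing one so that it is resolved rather than duplicated: if a common candidate occupies distinct positions a double swap on $p$ combined with a resolution of $b+c-a$ raises $C$, and if every common candidate is already aligned the matching sufficient statistics supply the auxiliary second and, if necessary, third vote exactly as there. In every branch the number of improper elements never exceeds one, and whenever the end state is improper its improper vote pairs with $i_{\rm pr}$.

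The bookkeeping is where the real work lies, and I expect three points to be the main obstacles. First, the three-swap budget is tight — the worst branch of Lemma \ref{lem:proper transposition lemma} already spends three swap operations — so the routing above must be shown to fit within three steps even while the pre-existing improper element is carried along and resolved. Second, I must verify that every intermediate improper dataset is \emph{compatible} with its successor in the sense of Definition \ref{def:compatible swap}, i.e.\ that the two votes of each swap overlap a common resolvable pair so that at most three votes are involved at each step; this is what certifies that the whole operation is a move of degree at most three, and it must be checked branch by branch. Third, in the case $q_1\in\{b,c\}$ I must confirm that resolving the induced collisions via Lemma \ref{lem:collision lemma} never destroys the concurrence just gained: the chain of swaps produced there only moves candidates into cells of $i_{\rm pr}$ at which a freshly created collision occurred, and since $q$ is injective such a cell is necessarily a non-match, so $C_{i_{\rm pr},i'}$ cannot decrease and the net effect of the matched cell in position $1$ is a strict increase. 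Assembling these verifications across the three cases establishes both conclusions together with the compatibility and resolvable-pair conditions.
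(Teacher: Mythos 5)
Your branch $q_1=a$ is sound and coincides with the paper's Case 1, but the other two branches have genuine gaps, and the decisive one is the claim, in the branch $q_1\in\{b,c\}$, that the collision resolution of Lemma \ref{lem:collision lemma} ``only moves candidates into cells of $i_{\rm pr}$ at which a freshly created collision occurred.'' This is false. By Definition \ref{def:improper} the candidate $a$ may occur \emph{twice} positively in the improper vote (the $-1$ at $(1,a)$ together with two $+1$'s still gives column sum one), and then resolving $b+c-a$ through $[i_{\rm im},i_{\rm pr}]$ as in \eqref{eq:resolve-improper} creates a collision of $a$ inside $i_{\rm im}$, not in $i_{\rm pr}$. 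The chain of Lemma \ref{lem:collision lemma} that resolves a collision sitting in $i_{\rm im}$ swaps, at each step, the \emph{pre-existing} occurrence of the colliding candidate in $i_{\rm im}$ down into $i_{\rm pr}$, at a position dictated by $i_{\rm im}$ and not by where any collision lies in $i_{\rm pr}$. Concretely, take $i_{\rm pr}=(a,c,y)$, $i_{\rm im}=(b+c-a,a,a)$, $q_1=b$: installing $b$ gives $i_{\rm pr}=(b,c,y)$, $i_{\rm im}=(c,a,a)$; if the $a$-collision is resolved by swapping the $a$ in position $2$, the candidate $c$ comes up, and the resulting $c$-collision is resolved by swapping the pre-existing $c$ at position $1$ down, yielding $i_{\rm pr}=(c,a,y)$ --- the matched cell is destroyed. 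Here the other choice (swap the $a$ in position $3$) happens to work, and in general the argument can be saved by showing that the two candidate chains launched from the two occurrences of $a$ intersect only in $a$, so at least one of them avoids $c$; but that two-process disjointness argument is exactly the machinery the paper builds in Lemmas \ref{lem:extended-1} and \ref{lem:extended-2}, which your proposal neither states nor proves.

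The branch $q_1\notin\{a,b,c\}$ is a larger gap: it is precisely where the paper needs Lemma \ref{lem:extended-2} (converting $b+c-a$ into $b+d-a$ or $c+d-a$ via a third vote) and the sub-cases that route one of the two $a$'s of the improper vote away to a fourth vote before the final match-installing swap; your ``routing'' proviso defers all of this without an argument, and the compatibility and resolvable-pair requirements are asserted rather than checked branch by branch. Worse, because you stratify by the value of $q_1$ alone, this branch also contains configurations in which $a$ occurs elsewhere in $q$. There the resource that makes the paper's Case 3-2 work --- a vote of $I$ containing no $a$ --- need not exist: if $a$ occurs in every vote of $P^{\prime}$, then every vote of $I$ contains $a$ (the improper vote with net count one, every other vote exactly once). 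The paper handles such configurations by its Case 2, increasing the concurrence at the position of $a$ in $q$ (via a double swap, preceded if necessary by Lemma \ref{lem:extended-1}) rather than at position $1$; your decomposition has no branch that does this, so the plan cannot be completed as stated.
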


To prove Lemma \ref{lem:improper transposition lemma}, we need the following two lemmas.
For an improper dataset $I$, we denote $\bar{I} = \{ \ii _{ij} \}$.
\begin{lemma}
\label{lem:extended-1}
Let $I$ be an improper dataset with $\ii_{i_{\rm im} j}=b+c-a$.
Suppose that $\ii_{ij^{\prime}} = a$ where $i\neq i_{\rm im}, j^{\prime} \neq j$.
Then, letting $\ii_{ij}=d \neq a$, $I$ can be transformed by a swap operation among two votes $R=\{i_{\rm im},i\}$ to another improper dataset $I^{\prime}$ containing the improper $i_{\rm im}$-th vote where $\ii^{\prime}_{i_{\rm im} j^{\prime}} = a$ and $\ii^{\prime}_{i_{\rm im} j}$ is either of $b+c-a$, $b+d-a$ or $c+d-a$.
\end{lemma}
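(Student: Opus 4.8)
The plan is to confine the entire transformation to the $2\times 2$ block of $\bar I$ determined by the rows $i_{\rm im},i$ and the positions $j,j'$,
\[
\left[\begin{array}{cc} b+c-a & e\\ d & a \end{array}\right]
\qquad (e:=\ii_{i_{\rm im}j'}),
\]
where the first row is the improper vote and the left column is position $j$. The idea is to realize the relocation of $a$ by a swap operation among these two votes consisting of one swap at position $j$, which rewrites the improper element, followed by one swap at position $j'$, which carries the candidate $a$ up from the $i$-th vote into the $i_{\rm im}$-th vote.

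First I would treat the generic case $d\neq b,c$ (recall $d\neq a$ is forced, since the $i$-th vote is proper and already carries $a$ at position $j'$). At position $j$ the column is $\twoelementcolumn{b+c-a}{d}$, so by the rules for swaps against an improper element in \eqref{eq:compatible} I may either leave it untouched, or apply $c\swapj{j}d$ to reach $\twoelementcolumn{b+d-a}{c}$, or apply $b\swapj{j}d$ to reach $\twoelementcolumn{c+d-a}{b}$. In each case I then perform the swap $a\swapj{j'}e$, which replaces $e$ by $a$ in the $i_{\rm im}$-th vote at position $j'$. Because the $+a$ thus created at position $j'$ is cancelled by the $-a$ still carried at position $j$, the $i_{\rm im}$-th vote stays improper with its unique negative entry at position $j$; its improper element is $b+c-a$, $b+d-a$, or $c+d-a$ according to the choice at position $j$, and $\ii'_{i_{\rm im}j'}=a$, as claimed. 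The degenerate subcases are immediate: if $e=a$ the candidate $a$ already sits at position $j'$ of the $i_{\rm im}$-th vote and only the position-$j$ swap (if any) is needed; and if $d=b$ or $d=c$ one of the two nontrivial options degenerates while the other coincides with $b+c-a$, so the achievable improper element is again among the three listed.

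The main obstacle is that these swaps may create collisions within the two votes: inserting $a$ at position $j'$ of the $i_{\rm im}$-th vote, and $e$ together with $b$ or $c$ into the $i$-th vote, can make one of these candidates appear twice in a single vote. I would clear such collisions exactly as in the proof of Lemma~\ref{lem:collision lemma}, by a chain of swaps performed only between the votes $i_{\rm im}$ and $i$ and only at positions different from $j$ and $j'$, so that neither the improper element at $j$ nor the newly placed $a$ at $j'$ is disturbed. The crucial point is that a swap merely transfers a candidate between the two votes and therefore preserves its total multiplicity in the pair; since $\bar I$ is a genuine improper dataset, in which each candidate occurs at most twice across the two votes, this bound is maintained, and the ``no candidate is swapped twice'' argument used for Lemma~\ref{lem:collision lemma} shows the chain terminates. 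The relocated $a$ cannot drive this chain, because after the operation it occurs only once as a positive entry (at position $j'$). Since we never introduce a second negative entry, the outcome is a valid improper dataset $I'$ with the unique improper vote still in row $i_{\rm im}$ and of the asserted form, obtained by a single swap operation among $R=\{i_{\rm im},i\}$. The one place demanding care — and the step I expect to be the real work — is verifying that this collision-clearing chain can always be routed away from positions $j,j'$, which is what guarantees that the final $2\times 2$ pattern above is preserved.
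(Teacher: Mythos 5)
Your proposal follows the same general shape as the paper's proof (relocate $a$ by the swap $a\swapj{j'}e$, then clear the resulting collisions by chains between the same two votes), but two of your central claims fail, and they fail exactly where the real work lies. First, the multiplicity bound is wrong: it is not true that every candidate occurs at most twice across the two votes, and in particular the relocated $a$ \emph{can} drive the chain. By Definition \ref{def:improper} every column sum of an improper vote is $0$ or $1$, so the $i_{\rm im}$-th vote, which carries $-1$ for $a$ at position $j$, must already contain $a$ as a positive entry in one or two \emph{other} positions (the paper's example $(a,d,a,b+c-a)$ has two). If it contains two, then after you insert $a$ at position $j'$ the vote has three positive entries for $a$, i.e.\ it is an improper vote with collision in the sense of Definition \ref{def:imp_col}, and this collision must be resolved without disturbing the $a$ you just placed at $j'$. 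This is the most delicate part of the paper's proof: it sets up the two competing chain processes \eqref{eq:process_one} and \eqref{eq:process_two}, shows that they can have no common candidate other than $a$, and argues that at least one of them (the one avoiding $c$, or both $b$ and $c$) terminates by the argument of Lemma \ref{lem:collision lemma}. Your proposal has no counterpart to any of this.

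Second, your routing claim --- that the collision-clearing chain can always be kept away from positions $j$ and $j'$ --- is not merely unverified, as you concede, but false in general, and the statement of the lemma already tells you why. When $d\neq b,c$, the $i_{\rm im}$-th vote may contain $d$ at some position $q\neq j$. If the chain swaps that $d$ down, then $d$ collides in the $i$-th vote with the copy sitting at position $j$, and the only admissible resolution is to swap the position-$j$ copy up into the improper element by $d\swap b$ or $d\swap c$. So the chain is sometimes forced to hit position $j$, and this is precisely the origin of the alternatives $b+d-a$ and $c+d-a$ in the conclusion: in the paper, the rewriting of the improper element is a forced by-product of collision resolution (see the discussion around \eqref{eq:collision_second_row}), not a free choice made up front as in your first step. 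One could repair your argument by replacing ``the chain avoids $j$'' with the weaker invariant that the $(i_{\rm im},j)$ entry remains of the form $x+y-a$ with $\{x,y\}\subset\{b,c,d\}$, but you would still owe the termination analysis: the case split $d\in\{b,c\}$ versus $d\notin\{b,c\}$, and above all the treatment of the extra copies of $a$, which is the substance of the paper's proof.
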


\begin{proof}
Without loss of generality, assume that $i_{\rm im}=j=1$ and $i=j^{\prime}=2$.
Let $\ii_{12}=e\neq b,c$.
We first make a swap of $\{ 1,2 \}: a \swapj{2} e$ to $I$:
\[
\left[
\begin{tabular}{ccccc}
$b+c-a$&$e$&$*$&$\cdots$&$*$\\
$d$&$a$&$$*$$&$\cdots$&$*$\\
$*$&$*$&$*$&$\cdots$&$*$\\
$\vdots$&$\vdots$&$\vdots$&$\vdots$&$\vdots$\\
$*$&$*$&$*$&$\cdots$&$*$\\
\end{tabular}
\right]
\rightarrow
\left[
\begin{tabular}{ccccc}
$b+c-a$&$a$&$*$&$\cdots$&$*$\\
$d$&$e$&$$*$$&$\cdots$&$*$\\
$*$&$*$&$*$&$\cdots$&$*$\\
$\vdots$&$\vdots$&$\vdots$&$\vdots$&$\vdots$\\
$*$&$*$&$*$&$\cdots$&$*$\\
\end{tabular}
\right].
\]
After the swap, $a$ may collide in the first vote and $e$ may collide in the second vote.
If there is no collision, the claim of this lemma is proved.
Otherwise, we can resolve these possible collisions in the following way.

We try to resolve the collision of $e$ in the second vote as in Lemma \ref{lem:collision lemma} considering a swap process:
\begin{equation} \label{eq:collision_second_row}
	\{1,2\}: e \swap s_1 \swap s_2 \swap \cdots  .
\end{equation}
In this process the collisions always occur in the second vote.

Consider the case that $d$ is equal to $b$ or $c$, say $d=b$.
Since the first and the second votes contain $b$ only in the first position,
$b$ does not collide in \eq{eq:collision_second_row}, which implies that $c$ does not collide in \eq{eq:collision_second_row}.
Since no $a$ is in the second vote at the beginning of \eq{eq:collision_second_row}, $a$ does not collide in \eq{eq:collision_second_row}.
Therefore, there is no swap involving the first position in \eq{eq:collision_second_row}, which implies that the collision of $e$ can be resolved as in Lemma \ref{lem:collision lemma}.

Consider the case $d\neq b,c$.
The difference of this case from Lemma \ref{lem:collision lemma} is that the process \eq{eq:collision_second_row} may hit the first position.  
This happens when $d$ appears in \eq{eq:collision_second_row} for the fist time as $s_l \swapj{j} d$, $j\neq 1$, and $d=\ii_{1j}$ in the first vote is swapped down to the second vote in the $j$-th position.  
Then we need to choose $b$ or $c$ and make the swap $d\swap b$ or $d\swap c$ in the first position.
By symmetry, without loss of generality, we perform $d \swapj{1} c$:
\[
\left[
\begin{tabular}{c}
$b+c-a$\\
$d$
\end{tabular}
\right]
\rightarrow
\left[
\begin{tabular}{c}
$b+d-a$\\
$c$
\end{tabular}
\right].
\]
This amounts to ignoring $b$ and $-a$ and we look at the improper element $b+c-a$ just as a proper element $c$ in resolving the collision of $e$. 
We leave $b-a$ in the $(1,1)$-element of $\bar{I}$ as it is during the sequence in \eq{eq:collision_second_row}.
Then just as in Lemma \ref{lem:collision lemma} it follows that no candidate appears twice in \eq{eq:collision_second_row}. 
Note that $b$ and $-a$ which were left in the $(1,1)$-element cause no trouble, because collision occurs always in the second vote. 
Indeed, $b$ causes no trouble because it does not leave the first vote.  
The candidate $a$ causes no trouble because the second vote does not initially contain $a$ and when $a$ is swapped from the first vote to the second vote, then the process in \eq{eq:collision_second_row} ends at that point.

After the collision of $e$ is resolved, $a$ may still collide in the first vote.
Let $j_1$ and $j_2$, $j_1 \neq j_2$, be the labels of positions containing $a$ in the first vote other than the first position.
To resolve this collision we consider the following two swap processes:
\begin{eqnarray} 
	\label{eq:process_one}
	&& \{1,2\}: a \swapj{j_1} s_1 \swap s_2 \swap \cdots , \\
	\label{eq:process_two}
	&& \{1,2\}: a \swapj{j_2} s_1^{\prime} \swap s_2^{\prime} \swap \cdots ,
\end{eqnarray}
where no swap in the $j_2$-th position is involved in \eq{eq:process_one} and no swap in the $j_1$-th position is involved in \eq{eq:process_two}.
Since every candidate in the first and second votes except $a$ appears in at most two positions,  the common candidate involved both in \eq{eq:process_one} and in \eq{eq:process_two} is $a$ only.
Then one of  \eq{eq:process_one} and \eq{eq:process_two}, say \eq{eq:process_one}, involves neither $b$ nor $c$, or involves $b$ and no $c$.
Therefore, ignoring $c,-a$ and $a$ in the $j_2$-th column, we see that the swap process \eq{eq:process_one} ends in finite number of steps as in Lemma \ref{lem:collision lemma}.
\end{proof}

\begin{lemma}
\label{lem:extended-2}
Let $I$ be an improper dataset with an improper element $\ii_{i_{\rm im} j}=b+c-a$.
Let $\ii_{i j}=d, i\neq i_{\rm im}$, and suppose that $d \neq a,b,c$.
Then $I$ can be transformed to another improper dataset $I^{\prime}$ by a swap operation
among two votes $R=\{i_{\rm im},i\}$ such that either $\ii^{\prime}_{i_{\rm im} j} =b+d-a, \ii^{\prime}_{ij}=c$ or $\ii^{\prime}_{i_{\rm im} j} =c+d-a, \ii^{\prime}_{ij}=b$.
\end{lemma}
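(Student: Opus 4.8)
The plan is to realize the desired column transformation by a single swap in position $j$ and then remove the collisions it creates without ever swapping in position $j$ again, so that the improper element is preserved. Take $i_{\rm im}=1$, $i=2$ and $j=1$ without loss of generality, so the improper first vote carries $b+c-a$ and the proper second vote carries $d$ in the first position. Since $b$ and $c$ enter the improper element symmetrically, I would perform the swap $\{1,2\}: b\swapj{1}d$, which is precisely the step $\twoelementcolumn{b+c-a}{d}\to\twoelementcolumn{d+c-a}{b}$ recorded in \eqref{eq:compatible}; it produces $\ii^{\prime}_{11}=c+d-a$ and $\ii^{\prime}_{21}=b$. The swap $c\swapj{1}d$ is entirely symmetric and yields the other stated outcome $\ii^{\prime}_{11}=b+d-a$, $\ii^{\prime}_{21}=c$. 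All subsequent swaps will also be among the two votes $R=\{1,2\}$, so that the whole procedure is a single swap operation $I\swapoperation{R} I^{\prime}$.

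Next I would identify the collisions produced. Because $I$ has no collision to begin with, each of $b,c$ occurs only in the first position of the first vote and $d$ occurs at most once in the first vote outside position $1$; consequently, after $b\swapj{1}d$ the candidate $b$ may collide in the second vote (if it already appeared there) and the candidate $d$ may collide in the first vote (if it already appeared there). I would clear these by chain swaps exactly as in Lemma \ref{lem:collision lemma}, at each step keeping the copy anchored in position $1$ fixed and moving the other copy in some position $\ne 1$. The decisive point is that the initial swap removed $b$ from the first vote and $d$ from the second vote, so no further copy of $b$ can ever re-enter the second vote and no further copy of $d$ can ever re-enter the first vote; hence the anchors sitting in position $1$ are never disturbed and every cleanup swap lies in a position $\ne 1$, leaving $c+d-a$ intact.

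When a cleanup chain meets the improper element I would argue as in Lemma \ref{lem:extended-1}: if a candidate equal to $c$ is pushed into the first vote, I treat $c+d-a$ as the single proper candidate $d$, leaving $c$ and $-a$ frozen in the $(1,1)$-cell, and continue resolving the collision among the positions $\ne 1$. Termination is the ``no candidate is swapped twice'' argument of Lemma \ref{lem:collision lemma}: every relevant candidate occurs at most twice in the two votes, while the frozen $c$ and $-a$ cause no genuine collision, since they never leave the first vote and all collisions stay confined to a single vote; the possible extra copy of $a$ is absorbed by the two-process device \eqref{eq:process_one}--\eqref{eq:process_two} of Lemma \ref{lem:extended-1}. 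Thus the chains stop after finitely many steps, leaving a dataset with no collision whose first vote is improper with improper element $c+d-a$ and whose second vote is proper with $b$ in position $1$; this is the required $I^{\prime}$.

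The step I expect to be the main obstacle is precisely this bookkeeping: one must verify that confining every cleanup swap to positions $\ne 1$ is always possible, i.e.\ that neither collision resolution is ever \emph{forced} through position $1$ (which would corrupt the improper element $c+d-a$), and that freezing the trapped candidate $c$ and absorbing the extra copy of $a$ do not break the invariant that no candidate is swapped twice. Once this is checked, the two choices $b\swapj{1}d$ and $c\swapj{1}d$ account for the two alternatives in the statement, and the lemma follows.
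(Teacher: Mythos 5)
Your opening move is the same as the paper's (swap $d$ with one of $b,c$ at position $j$, then clean up collisions by chain swaps), but your central invariant --- that every cleanup swap can be confined to positions $\neq j$, so that the improper element produced by the first swap, say $c+d-a$, survives intact --- is false, and the lemma is stated with two alternative outcomes precisely because of the configuration that breaks it. After $\{1,2\}: b\swapj{1}d$ the improper vote still contains $c$ inside the cell $c+d-a$. When you resolve the collision of $d$ in the improper vote by a chain in positions $\neq 1$, the chain pulls candidates up from the proper vote, and nothing prevents it from pulling up $c$: this happens whenever the chain reaches the position where the proper vote holds $c$. At that moment $c$ has column sum $2$ in the improper vote (one copy newly arrived, one copy inside $c+d-a$), and this collision cannot be resolved in positions $\neq 1$: swapping the new copy back down merely recreates the collision that forced it up, so the only admissible move is the swap at position $1$ exchanging the $c$ of the improper element with the anchor $b$, which turns the improper element into $b+d-a$, puts $c$ into position $1$ of the proper vote, and may then force an unravelling of your earlier $b$-cleanup. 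Your ``freezing'' device borrowed from Lemma \ref{lem:extended-1} does not apply here: in that lemma the collisions live in the \emph{proper} vote and the frozen part of the improper element is never itself a colliding copy, whereas here the frozen $c$ is one of the two colliding copies, so leaving it frozen leaves an improper vote with collision --- not a valid improper dataset $I^{\prime}$. The paper does not claim to preserve the outcome of the initial swap; it lets the $c$-collision flip $c+d-a$ to $b+d-a$, which is exactly why both alternatives appear in the statement.

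There is a second, related gap in your termination argument. You invoke the two-process device \eq{eq:process_one}--\eq{eq:process_two}, but that device addresses a collision of $a$ \emph{itself} in the improper vote (choosing between the two positions occupied by $a$), not occurrences of $a$ inside a cleanup chain. The paper's device in this lemma is different: it considers both candidate initial swaps $d\swapj{1}b$ and $d\swapj{1}c$ as processes \eq{eq:lem_pro_one} and \eq{eq:lem_pro_two}, observes that $a$ occupies at most three positions in the two votes so at least one of the two processes meets $a$ at most once, and only then commits to that initial swap, handling the single possible occurrence of $a$ by the subprocess argument of Lemma \ref{lem:collision lemma}; the same count is reused for the second chain \eq{eq:extended_2_2}. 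Without some such choice, your chains may meet $a$ twice, and the ``no candidate is swapped twice'' argument that you rely on for finiteness breaks down.
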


\begin{proof}
Without loss of generality assume $i_{\rm im} = j =1$ and $i=2$. 
Then the upper-left $2\times 1$ submatrix of $I$ is $\twoelementcolumn{b+c-a}{d}$.
Note that $[1,2]$ is not a resolvable pair because $d\neq a$.

We begin by considering two swaps of $\{ 1,2 \}: d \swapj{1} b$ and $\{ 1,2 \}: d \swapj{1} c$.
If $\{ 1,2 \}: d \swapj{1} b$ is applied to $I$, $b$ may collide in the second vote and $d$ may collide in the first vote.
If $\{ 1,2 \}: d \swapj{1} c$ is applied to $I$, $c$ may collide in the second vote and $d$ may collide in the first vote.
Considering the resolution of possible collisions in the second vote for each swap, the following two swap processes are obtained:
\begin{eqnarray} 
	\label{eq:lem_pro_one}
	&& \{1,2\}: d \swapj{1} b \swap s_1 \swap s_2 \swap \cdots , \\
	\label{eq:lem_pro_two}
	&& \{1,2\}: d \swapj{1} c \swap s^{\prime}_1 \swap s^{\prime}_2 \swap \cdots .
\end{eqnarray}
Since the number of positions which contains $a$ in the first or second vote is at most three, one of \eq{eq:lem_pro_one} and \eq{eq:lem_pro_two}, say \eq{eq:lem_pro_one}, contains $a$ at most once.
Note that each candidates other than $a$ appears in the first and second votes at most twice.
If \eq{eq:lem_pro_one} does not contain $a$, we see that \eq{eq:lem_pro_one} ends in finite number of steps as in Lemma \ref{lem:collision lemma}.
If \eq{eq:lem_pro_one} contains one $a$, the finiteness of \eq{eq:lem_pro_one} is proved by applying the similar discussion of Lemma \ref{lem:collision lemma} for the subprocess of \eq{eq:lem_pro_one} which starts from $a$.

After resolving the collision of $b$ in the second vote, $d$ may still collide in the first vote.
At this point the second vote contains at most one $a$.
Consider a swap process
\begin{equation} \label{eq:extended_2_2}
	\{1,2\}: d \swap s^{\prime \prime}_1 \swap s^{\prime \prime}_2 \swap \cdots .
\end{equation}
Since $b$ has already been involved in \eq{eq:lem_pro_one}, no $s^{\prime \prime}_i$ is equal to $b$.
If some $s^{\prime \prime}_i$ is $c$, the chain swap
\[
	\{1,2\}: d \swap s^{\prime \prime}_1 \swap s^{\prime \prime}_2 \swap \cdots \swap c
\]
resolves the collisions in the first vote.
Since $a$ appears in the second vote at most once, the process \eq{eq:extended_2_2} contains $a$ at most once.
If $a$ does not appear in \eq{eq:extended_2_2}, the process does not hit the first position and we see that \eq{eq:lem_pro_one} ends in finite number of steps as in Lemma \ref{lem:collision lemma}.
If $a$ appears in \eq{eq:extended_2_2}, the finiteness ~~~~~~~~~~~~~~~~~~~~~~~~~~of \eq{eq:lem_pro_one} is proved by applying the similar discussion of Lemma \ref{lem:collision lemma} for the subprocess of \eq{eq:lem_pro_one} which starts from $a$.
\end{proof}

Using Lemmas \ref{lem:extended-1} and \ref{lem:extended-2} we shall prove Lemma \ref{lem:improper transposition lemma}.

\begin{proof}[Proof of Lemma \ref{lem:improper transposition lemma}]
Without loss of generality, let $i^{\prime}=1$, 
$[i_{\rm im},i_{\rm pr}]=[2,1]$,  $\ii_{11}=a$, and $\ii_{21}=b+c-a$.
Then $\bar{I}$ looks like
\[
\left[
\begin{tabular}{cccc}
$a$&$*$&$\cdots$&$*$\\
$b+c-a$&$*$&$\cdots$&$*$\\
$*$&$*$&$\cdots$&$*$\\
$\vdots$&$\vdots$&$\vdots$&$\vdots$\\
$*$&$*$&$\cdots$&$*$\\
\end{tabular}
\right] .
\]
In the cases below, where a resulting dataset is improper, $[2,1]$ will be a resolvable pair.
\begin{description}
\item[Case 1] $p^{\prime}_{11}=a$.\\
In this case in $P^{\prime}$ and hence in $I$, the candidate $a$ appears at least once in the first position. 
Therefore $a$ is in the first position in some vote $i>2$ in $I$.  
Let $i=3$. 
Then the votes $[2,3]$ of $I$ form a revolvable pair and $I$ can be transformed to a proper dataset by Lemma \ref{lem:getting proper lemma}.  
This corresponds ~~~~~~to 2) of the lemma and is summarized as $I \swapoperation{\{ 2,3 \} } P$.

\item[Case 2] $p^{\prime}_{11}\neq a$, but $a$ appears in the first vote of $P^{\prime}$.\\
Without loss of generality let $p^{\prime}_{12}=a$.  Let $d=\ii_{12}$.
\begin{description}
\item[Case 2-1]  $\ii_{22}=a$.\\
We perform the double swap $a\swapj{1} d \swapj{2} a$ to the first two votes
\[
\left[
\begin{tabular}{ccccc}
$a$&$d$&$*$&$\cdots$&$*$\\
$b+c-a$&$a$&$*$&$\cdots$&$*$\\
$*$&$*$&$*$&$\cdots$&$*$\\
$\vdots$&$\vdots$&$\vdots$&$\vdots$&$\vdots$\\
$*$&$*$&$*$&$\cdots$&$*$\\
\end{tabular}
\right]
\ \rightarrow \  
\left[
\begin{tabular}{ccccc}
$d$&$a$&$*$&$\cdots$&$*$\\
$b+c-d$&$d$&$*$&$\cdots$&$*$\\
$*$&$*$&$*$&$\cdots$&$*$\\
$\vdots$&$\vdots$&$\vdots$&$\vdots$&$\vdots$\\
$*$&$*$&$*$&$\cdots$&$*$
\end{tabular}
\right].
\]
This increases $C_{11}$. This corresponds to 1) of the lemma and is summarized as $I \swapoperation{\{ 1,2 \} } I$.

\item[Case 2-2]  $\ii_{22}\neq a$.\\
Since $p^{\prime}_{12}=a$, $a$ has to appear in the second position of $I$.
Without loss of generality, let $\ii_{32}=a$. Let $e=\ii_{31}$ and $f=\ii_{22}$. 
Then $\bar{P}$ looks like
\[
\left[
\begin{tabular}{ccccc}
$a$&$d$&$*$&$\cdots$&$*$\\
$b+c-a$&$f$&$*$&$\cdots$&$*$\\
$e$&$a$&$*$&$\cdots$&$*$\\
$*$&$*$&$*$&$\cdots$&$*$\\
$\vdots$&$\vdots$&$\vdots$&$\vdots$&$\vdots$\\
$*$&$*$&$*$&$\cdots$&$*$
\end{tabular}
\right].
\]
From Lemma \ref{lem:extended-1} applied to votes $\{2,3\}$, this case is reduced to Case 2-1.
This case together with the subsequent operation of Case 2-1 is summarized as 
$I \swapoperation{\{ 2,3 \} } I \swapoperation{\{ 1,2 \} } I$.

\end{description}

\item[Case 3] $a$ does not appear in the first vote of $P^{\prime}$.\\
Let $d=p^{\prime}_{11}$, $d\neq a$.  
If $d=b$ or $d=c$, we directly go to the Cases 3-1 or Case 3-2 below.
If $d\neq b,c$, we need an extra step as follows.
Let $\ii_{31}=d$ without loss of generality. 
Then $\bar{I}$ looks like
\[
\left[
\begin{tabular}{cccc}
$a$&$*$&$\cdots$&$*$\\
$b+c-a$&$*$&$\cdots$&$*$\\
$d$&$*$&$\cdots$&$*$\\
$*$&$*$&$\cdots$&$*$\\
$\vdots$&$\vdots$&$\vdots$&$\vdots$\\
$*$&$*$&$\cdots$&$*$
\end{tabular}
\right] .
\]

By Lemma \ref{lem:extended-2} applied to votes $\{2,3\}$, we move $d$ to the second vote resolving the possible collisions.
At this point the $(2,1)$-element  of $I$ may be $b+d-a$ or $c+d-a$.  
We consider the former case without loss of generality.
Then $\bar{I}$ looks like
\begin{equation}
\label{eq:case-3-after-extended-2}
\left[
\begin{tabular}{cccc}
$a$&$*$&$\cdots$&$*$\\
$b+d-a$&$*$&$\cdots$&$*$\\
$c$&$*$&$\cdots$&$*$\\
$*$&$*$&$\cdots$&$*$\\
$\vdots$&$\vdots$&$\vdots$&$\vdots$\\
$*$&$*$&$\cdots$&$*$
\end{tabular}
\right] .
\end{equation}
\begin{description}
\item[Case 3-1] $d$ appears in the first vote of $I$.\\
Let $d=\ii_{12}$ without loss of generality.
We apply a double swap $a\swapj{1}d\swapj{2}a$:
\[
\left[
\begin{tabular}{ccccc}
$a$&$d$&$*$&$\cdots$&$*$\\
$b+d-a$&$e$&$*$&$\cdots$&$*$\\
$*$&$*$&$*$&$\cdots$&$*$\\
$\vdots$&$\vdots$&$\vdots$&$\vdots$&$\vdots$\\
$*$&$*$&$*$&$\cdots$&$*$\\
\end{tabular}
\right]
\rightarrow
\left[
\begin{tabular}{ccccc}
$d$&$a$&$*$&$\cdots$&$*$\\
$b$&$d+e-a$&$*$&$\cdots$&$*$\\
$*$&$*$&$*$&$\cdots$&$*$\\
$\vdots$&$\vdots$&$\vdots$&$\vdots$&$\vdots$\\
$*$&$*$&$*$&$\cdots$&$*$\\
\end{tabular}
\right]. 
\]
This case is summarized as $I\swapoperation{\{1,2\}}I$ or 
$I\swapoperation{\{2,3\}} I\swapoperation{\{1,2\}}I$, where $I\swapoperation{\{2,3\}}I$
is needed for the case $d\neq b,c$.  
We do not repeat this comment for the other cases below.

\item[Case 3-2] $d$ does not appear in the first vote of $I$.\\
If $a$ appears only once in the second vote, say in the $j$-th position, $j>1$, then we can apply the swap
$\{1,2\}: a\swapj{1}d$ to make $I$ proper, which is summarized as
$I\swapoperation{\{1,2\}}P$ or 
$I\swapoperation{\{2,3\}} I\swapoperation{\{1,2\}}P$. 

Hence we consider the case that  $a$ appears in two positions labeled by $j_1, j_2$, $1 < j_1 < j_2$ of the second vote of $I$.
Since $P^{\prime}$ does not contain $a$ in the first vote, $I$ has a vote not ~~~~~~~containing $a$.

\begin{description}
\item[Case 3-2-1] The third vote of \eqref{eq:case-3-after-extended-2} contains $a$.\\
Without loss of generality, suppose  the fourth vote of $I$  does not contain $a$.
Denote $e=\ii_{41}$. 
Interpreting two $a$'s in the second vote as a collision, we try to resolve the collision by swapping $a$ down to the fourth vote.
Then we have two processes of swaps
\begin{eqnarray} 
	\label{eq:without_d}
	&& \{2,4\} : a\swapj{j_1} s_1 \swap s_2 \swap \cdots , \\
	\label{eq:with_d}
	&& \{2,4\} : a\swapj{j_2} s^{\prime}_1 \swap s^{\prime}_2 \swap \cdots .
\end{eqnarray}
During these processes the collisions occur in the second vote.
Only one of these two can contain $d$.
Then we can choose a process, say \eq{eq:without_d}, which does not contain $d$.
As in Lemma \ref{lem:collision lemma} no candidate appears twice in \eq{eq:without_d}.
Hence, \eq{eq:without_d} is a finite chain swap resolving the collisions.

At this stage $\bar{I}$ looks like
\[
\left[
\begin{tabular}{cccc}
$a$&$*$&$\cdots$&$*$\\
$b+d-a$&$*$&$\cdots$&$*$\\
$c$&$*$&$\cdots$&$*$\\
$e$&$*$&$\cdots$&$*$\\
$*$&$*$&$\cdots$&$*$\\
$\vdots$&$\vdots$&$\vdots$&$\vdots$\\
$*$&$*$&$\cdots$&$*$
\end{tabular}
\right]
\ \ \mathrm{or} \  \ 
\left[
\begin{tabular}{ccccc}
$a$&$*$&$\cdots$&$*$\\
$e+d-a$&$*$&$\cdots$&$*$\\
$c$&$*$&$\cdots$&$*$\\
$b$&$*$&$\cdots$&$*$\\
$*$&$*$&$\cdots$&$*$\\
$\vdots$&$\vdots$&$\vdots$&$\vdots$\\
$*$&$*$&$\cdots$&$*$
\end{tabular}
\right].
\]
In either case, the swap $\{1,2\}: a\swapj{1} d$ increases $C$ and makes $I$ proper.
The whole process for this case is summarized as 
$I \swapoperation{\{2,4\}} I \swapoperation{\{1,2\}} P$
or 
$I\swapoperation{\{2,3\}}I \swapoperation{\{2,4\}} I \swapoperation{\{1,2\}} P$.

\item[Case 3-2-2] The third vote of \eqref{eq:case-3-after-extended-2} does not contain $a$.\\
We can just use the third vote of \eqref{eq:case-3-after-extended-2} as the fourth vote of the previous case. Hence 
$I \swapoperation{\{2,4\}} I$ is replaced by $I \swapoperation{\{2,3\}} I$ and 
this case is summarized as
$I\swapoperation{\{2,3\}}I \swapoperation{\{1,2\}} P$.
\end{description}
\end{description}
\end{description}
\end{proof}

We now summarize what we have proved so far.  We will again discuss the following result in Section \ref{subsec:negative}.

Let $P$ and $P^{\prime}$ be two proper datasets with the same sufficient statistic, respectively.  
Suppose that the $i$-th vote of $P$ and the $i^{\prime}$-th vote of $P^{\prime}$ are different, i.e., $V^{(i)} \neq V^{(i^{\prime})}$.
If we allow improper datasets, then by a sequence of swap operations among two votes of $P$, we can make the $i$-th vote of $P$ identical with the $i^{\prime}$-th vote of $P^{\prime}$.  
Then we throw away this common vote from the two datasets and repeat the procedure.  
It should be noted that $P$ may have been transformed to an improper dataset $I$ when two votes coincide, 
but $I$ contains a resolvable pair $[i_{\rm im},i_{\rm pr}]$ with $i_{\rm pr}\neq i$.
Hence we can continue this process until $P$ is fully transformed to $P^{\prime}$.

In order to finish our proof of Theorem \ref{thm:main}, we have to show that each intermediate improper dataset can be temporarily transformed to a proper dataset and the consecutive proper datasets are connected by operations among three votes.   

We decompose the whole process of transforming $P$ to $P^{\prime}$ into segments that consist of transformations from a proper dataset to another proper dataset with improper intermediate steps.
One segment is depicted as follows:
\begin{equation}
P_1 \longleftrightarrow I_1
 \longleftrightarrow \cdots
 \longleftrightarrow  I_i
 \longleftrightarrow  I_{i+1}
 \longleftrightarrow \cdots
 \longleftrightarrow  I_m 
 \longleftrightarrow P_m,
\label{eq:all move}
\end{equation}
where each $\longleftrightarrow$ (omitting $R$) denotes a swap operation among two votes in Lemmas \ref{lem:proper transposition lemma} and \ref{lem:improper transposition lemma}. 
By these lemmas, the number of concurrences in $P_m$ is larger than in $P_1$.
We claim that for any consecutive improper datasets $I_i$, $I_{i+1}$, we can find proper datasets $P_i,P^{\prime}_i,P^{\prime}_{i+1}$ satisfying
\begin{eqnarray}
&&P_i
 \longleftrightarrow  I_i
 \longleftrightarrow  I_{i+1}
 \longleftrightarrow P^{\prime}_{i+1}\label{eq:partial move}, \\
&&P^{\prime}_i
 \longleftrightarrow  I_i
 \longleftrightarrow P_i\label{eq:partial move 2}.
\end{eqnarray}
The swap operation for $I_i \longleftrightarrow  I_{i+1}$ is compatible with both datasets.
Hence if we choose a common resolvable pair for $I_i$ and $I_{i+1}$, then 
$\eqref{eq:partial move}$ for transforming $P_i$ to $P^{\prime}_i$ involves three votes.
On the other hand, since both $P^{\prime}_i \longleftrightarrow I_i$ and $I_i \longleftrightarrow P_i$ involve an improper vote, the operation of transforming $P^{\prime}_i$ to $P_i$ involves three votes.
This completes the proof of Theorem \ref{thm:main}.

\section{Structure of moves of degree two and three}
\label{sec:move-structure}

To analyze the structure of moves in the Markov basis,
it is enough to consider the moves of degree two and three because of Theorem \ref{thm:main}.
It means that we only need to consider the dataset consisting of two or three votes.
Then we can analyze the structure of moves by discussing the structure of fiber for the sufficient statistic.
Details of computational results used in this section are available at \cite{source}.

\subsection{Moves of degree two}
\label{sec:structure-two}

We begin by discussing the structure of fibers for datasets which consist of two votes.
Consider a sequence of multisets consisting of two elements in $[n]$ of the form:
\begin{equation}
\label{eq:sufficient-statistic-for-tow-rows}
	M=(\{a_1, a_2\}, \ldots, \{ a_{2r-1}, a_{2r} \} ),
      \end{equation}
where $a_j \in [n], j=1,\ldots,2r$.
Each multiset $\{ a_{2j-1}, a_{2j} \}$ 
corresponds to the multiset of two candidates in the $j$-th position.
This sequence is a possible observation of sufficient statistic in the $(n,r)$-Birkhoff model
if and only if each $k \in [n]$ appears in the multiset $\{ a_1,a_2,\ldots, a_{2r} \}$ at most twice.
For the observation $( \{a_1, a_2\}, \ldots, \{ a_{2r-1}, a_{2r} \} )$ of the sufficient statistic,
define a graph $G_M$ on the vertex set $[r]$ as follows:
for each $j,j^{\prime} \in [r], j \neq j^{\prime}$, an edge $\{ j, j^{\prime} \}$ of $G_M$ exists 
if and only if $\{ a_{2j-1}, a_{2j} \} \cap \{ a_{2j^{\prime}-1}, a_{2j^{\prime}} \} \neq \emptyset$.
We call the multiset $\{ a_{2j-1}, a_{2j} \}$ the $j$-th block for $j \in [r]$.

For an isolated vertex in $G_M$ the corresponding block 
has the form either $\{ k, k \}$ or $\{ k, k^{\prime}\}$ for some $k, k^{\prime} \in [n], k \neq k^{\prime}$.
In the former case there is no necessity to distinguish two votes by this block.
In the latter case the votes might be distinguished by this block.
Since every non-isolated vertex is contained by at most two edges,
each connected components of $G_M$ is a chain or a cycle if it consists of more than one vertex.
Then the candidates are uniquely assigned to two votes as a subset of the votes.
Let $L$ be the number of connected components of $G_M$ 
brushing aside those of the form $\{ k, k\}$ for some $k \in [n]$.
The number of elements of the corresponding fiber is $2^{L-1}$.
Especially, the move arising from the corresponding fiber is indispensable
if and only if $L=2$.

We evaluate the number of moves of degree two in a minimal Markov basis.
For the case $r=2$ the number of moves of degree two is $6\binom{n}{4}$.
For the case $r=3$, the number of moves of degree two is
\begin{eqnarray*}
	&&1 \binom{n}{6} \frac{6!}{2!2!2!} (4-1) + \binom{3}{2} \binom{n}{5} \frac{5!}{2!2!1!} (2-1) + \binom{3}{2} \binom{n}{5} \frac{5!}{2!1!1!1!} (2-1) + \binom{3}{2} \binom{n}{4} \frac{4!}{2!2!} (2-1) \\
	&=& 18 \binom{n}{4} + 270 \binom{n}{5} + 270 \binom{n}{6}.
\end{eqnarray*}

Consider the case $r=4$. 
The number of moves of degree two for the partition $(3,1)$ of four is 
\begin{eqnarray*}
	\binom{4}{3} \binom{n}{6} \frac{6!}{1!1!1!1!2!} (2-1) + \binom{4}{3} \binom{n}{5} \frac{5!}{1!1!1!2!} (2-1) 
	= 240 \binom{n}{5} + 1440 \binom{n}{6}. 
\end{eqnarray*}
The number of moves of degree two for the partition $(2,2)$ of four is 
\begin{eqnarray*}
	&& \frac{1}{2!} \binom{4}{2} \binom{n}{6} \frac{6!}{1!1!1!1!1!1!}(2-1) + \binom{4}{2} \binom{n}{5} \frac{5!}{1!1!1!2!}(2-1) + 
	\frac{1}{2!} \binom{4}{2} \binom{n}{4} \frac{4!}{2!2!}(2-1) \\
	&=& 18 \binom{n}{4} + 360 \binom{n}{5} + 2160 \binom{n}{6}.
\end{eqnarray*}
The number of moves of degree two for the partition $(2,1,1)$ of four is
\begin{eqnarray*}
	&& \binom{4}{2} \binom{n}{7} \frac{7!}{1!1!1!2!2!} (4-1) + \frac{4!}{2!1!1!} \binom{n}{6} \frac{6!}{1!1!1!2!1!} (2-1) + \binom{4}{2} \binom{n}{6} \frac{6!}{2!2!2!} (4-1) \\
		&& + \frac{4!}{2!1!1!} \binom{n}{5} \frac{5!}{2!2!1!} (2-1) \\
	&=& 360 \binom{n}{5} + 5940 \binom{n}{6} + 22680 \binom{n}{7}.
\end{eqnarray*}
The number of moves of degree two for the partition $(1,1,1,1)$ of four is 
\begin{eqnarray*}
	&& n_8\frac{8!}{2!2!2!2!}(8-1) + \binom{4}{3} \binom{n}{7} \frac{7!}{2!2!2!1!}(4-1) + \binom{4}{2} \binom{n}{6} \frac{6!}{2!2!1!1!}(2-1) \\
	&=& 1080 \binom{n}{6} + 7560 \binom{n}{7} + 17640 \binom{n}{8}.
\end{eqnarray*}
Then the number of moves of degree two for $r=4$ is
\begin{eqnarray*}
	18 \binom{n}{4} + 960 \binom{n}{5} + 10620 \binom{n}{6} + 30240 \binom{n}{7} + 17640 \binom{n}{8}.
\end{eqnarray*}

By the similar calculation we obtain the following polynomial which represents the number of moves of degree two for the case $r=5$:
\[
	1050 \binom{n}{5} + 40050 \binom{n}{6} + 485100 \binom{n}{7} + 2444400 \binom{n}{8} + 3969000 \binom{n}{9} + 1701000 \binom{n}{10}.
\]

The number of moves of degree two in minimal Markov bases are summarized as \tabref{table:number_of_two}.
The authors confirmed that the numbers above the horizontal lines in \tabref{table:number_of_two} coincide with the numbers obtained by the software 4ti2\cite{4ti2}.

\begin{table}[t]
\centering
\renewcommand{\tabcolsep}{5pt}
{\footnotesize
\caption{Number of moves of degree two.}
\label{table:number_of_two}
\begin{tabular}{|r||r|r|r|r|}
\hline
\backslashbox{$n$}{$r$} &2&3&4&5\\\hline\hline
1&0&0&0&0\\
2&0&0&0&0\\
3&0&0&0&0\\
4&6&18&18&0\\
5&30&360&1050&1050\\\cline{4-5}
6&90&2160&16650&46350\\
7&210&8190&125370&787500\\
8&420&23940&611940&7505400\\\cline{3-3}
9&756&58968&2262708&46928700\\
10&1260&128520&6898500&218276100\\\hline
\end{tabular}
}
\end{table}

\subsection{Moves of degree three}
\label{subsec:degree three}

The structure of fibers for datasets which consists of three votes is more complicated.
Let 
\[
	M = ( \{a_1, b_1, c_1 \}, \ldots, \{ a_r, b_r, c_r \} )
\]
be the observed sufficient statistic 
where $a_j,b_j,c_j \in [n], j=1,\ldots,r$ and each $k \in [n]$ appears in the multiset
$\{ a_1,\ldots,a_r, b_1,\ldots, b_r, c_1,\ldots, c_r \}$ at most three times. 
Similarly to the case of two votes, a graph $G_M$ on $[r]$ can be defined:
an edge $\{ j, j^{\prime}\}$ of $G_M$ exists if and only if $\{ a_j, b_j, c_j \} \cap \{ a_{j^{\prime}}, b_{j^{\prime}}, c_{j^{\prime}} \} \neq \emptyset$.
For example, consider the case $n=6, r=3$ and let the set of candidates be $\{ a,b,c,d,e,f \}$.
Let 
\[
	M=(\{ a,a,b \}, \{ c,c,d\}, \{ d,e,f \} )
\]
be the observed sufficient statistic.
The vertex set of $G_M$ is $[3]$ and the connected components are $\{ 1 \}$ and $\{ 2,3 \}$.
The possible assignment in the first connected component is $\{ (a), (a), (b) \}$.
In the second connected component there are two kinds of assignments, $\{ (c,d) , (c,e), (d,f) \}$ and $\{ (c,d), (c,f), (d,e) \}$.
In this case the number of elements of the corresponding fiber is six.

Now we discuss the detailed structure of fibers arising from the sufficient statistic $M=( \{a_1, b_1, c_1 \}, $ $\ldots, \{ a_r, b_r, c_r \} )$ such that the associated graph $G_M$ is connected.
Thanks to the symmetry in permutation of ranking orders and of labels of the candidates, 
we consider the equivalence classes of such sufficient statistics.
Figures \ref{fig:2_need3_0}--\ref{fig:3_need3_5} show the graph $G_M$'s for all the representatives of the equivalence classes for $r=2,3$ whose corresponding fiber needs a move of degree three for its connectivity.
The moves of degree three arising from these figures except \figref{fig:3_need3_4} are indispensable.
On the other hand, to guarantee the connectivity of the fiber associated with \figref{fig:3_need3_4}, the Markov basis needs to include a dispensable move of degree three.
\begin{figure}[t]
\centering
\begin{minipage}{0.24\hsize}
\begin{center}
\includegraphics[width=2.4cm]{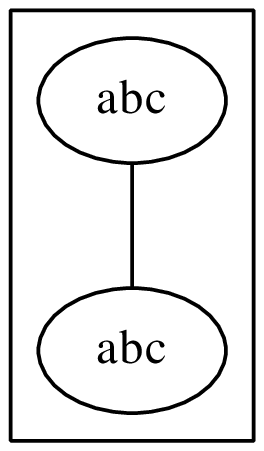}
\vspace{-5mm}
\figcaption{}
\label{fig:2_need3_0}
\end{center}
\end{minipage}
\begin{minipage}{0.24\hsize}
\begin{center}
\includegraphics[width=2.8cm]{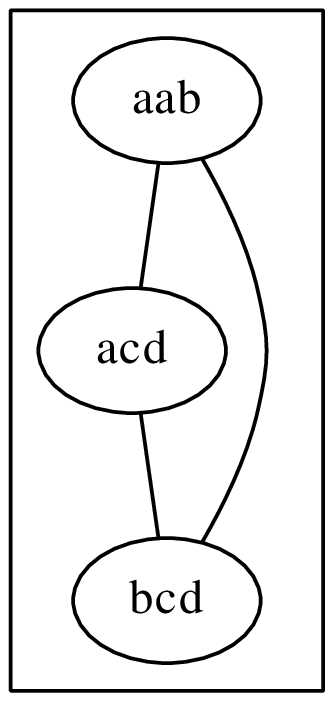}
\vspace{-5mm}
\figcaption{}
\label{fig:3_need3_0}
\end{center}
\end{minipage}
\begin{minipage}{0.24\hsize}
\begin{center}
\includegraphics[width=2.8cm]{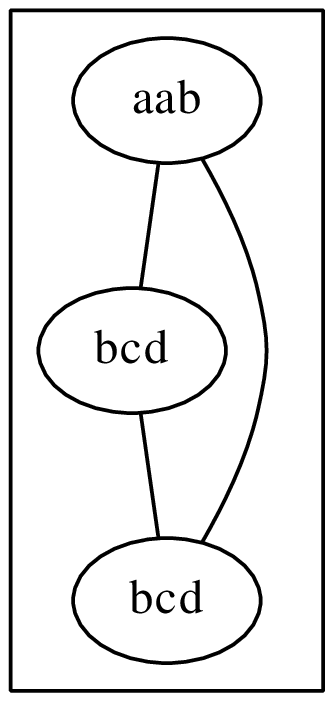}
\vspace{-5mm}
\figcaption{}
\label{fig:3_need3_1}
\end{center}
\end{minipage}
\begin{minipage}{0.24\hsize}
\begin{center}
\includegraphics[width=2.8cm]{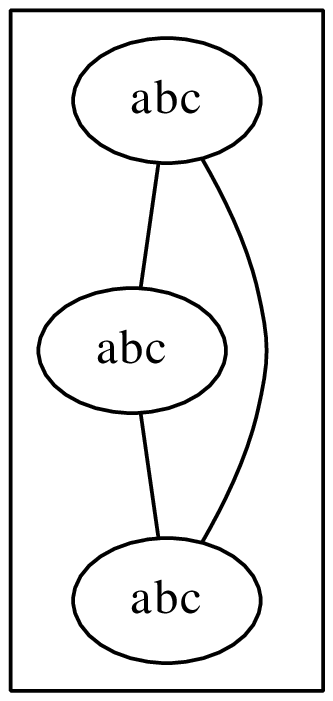}
\vspace{-5mm}
\figcaption{}
\label{fig:3_need3_2}
\end{center}
\end{minipage}
\end{figure}

\begin{figure}[t]
\centering
\begin{minipage}{0.24\hsize}
\begin{center}
\includegraphics[width=2.8cm]{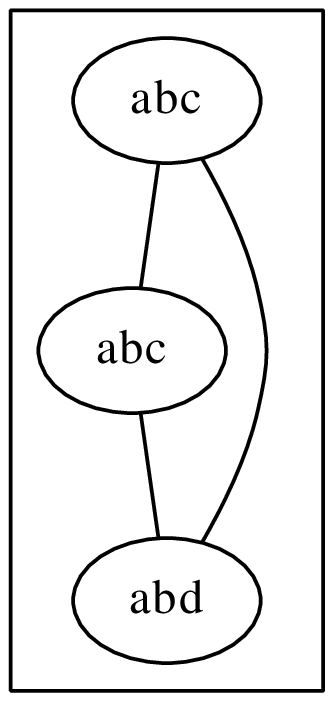}
\vspace{-5mm}
\figcaption{}
\label{fig:3_need3_3}
\end{center}
\end{minipage}
\begin{minipage}{0.24\hsize}
\begin{center}
\includegraphics[width=2.8cm]{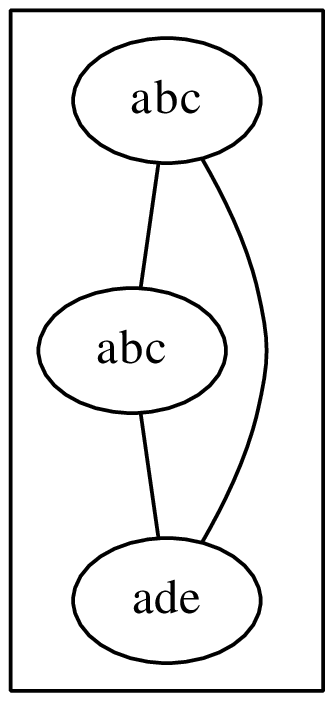}
\vspace{-5mm}
\figcaption{}
\label{fig:3_need3_4}
\end{center}
\end{minipage}
\begin{minipage}{0.24\hsize}
\begin{center}
\includegraphics[width=2.8cm]{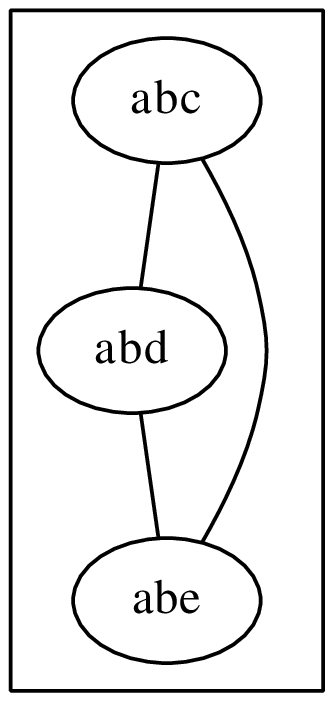}
\vspace{-5mm}
\figcaption{}
\label{fig:3_need3_5}
\end{center}
\end{minipage}
\end{figure}

Let us consider the case $r=4$.
There are $241$ different equivalence classes of the sufficient statistic $M$'s with connected $G_M$.
For $38$ classes among them, the corresponding fibers need moves of degree three for their connectivity.
\tabref{table:summary_four} summarizes the structure of the equivalence classes.
In this table, 38 equivalence classes are classified by the associated graph $G_M$, the number $n_M$ of candidates appearing in $M$, and whether a move of degree three needed for the connectivity of the corresponding fiber is indispensable or not.
\figref{fig:4_need3_0} shows an example of a fiber connected by an indispensable move of degree three.
On the other hand, the fiber in \figref{fig:4_need3_2} needs a dispensable move of degree three for its connectivity.
\newlength{\myheight}
\setlength{\myheight}{0.4cm}
\begin{table}[t]
\begin{center}
\renewcommand{\tabcolsep}{5pt}
{\footnotesize
\tblcaption{Classification of the equivalence classes for $r=4$.}
\label{table:summary_four}
\begin{tabular}{|c|r|c||r|}
\hline
$G_M$ & $n_M$ & indispensability & $\#$ of equiv. classes\\
\hline\hline
\setlength\unitlength{2mm}
\begin{picture}(4.5,4.5)(0,0)
	\put(0,0){\line(1,0){4}}
	\put(4,0){\line(0,1){4}}
	\put(0,0){\line(0,1){4}}
	\put(0,4){\line(1,0){4}}
	\put(0,0){\circle*{0.7}}
	\put(0,4){\circle*{0.7}}
	\put(4,0){\circle*{0.7}}
	\put(4,4){\circle*{0.7}}
\end{picture}
 & 5 & yes &2 \\ \hline
\parbox[c][\myheight][b]{0cm}{
\setlength\unitlength{2mm}
\begin{picture}(4.5,4.5)(0,0)
	\put(-2,-4.5){\line(1,0){4}}
	\put(2,-4.5){\line(0,1){4}}
	\put(-2,-4.5){\line(0,1){4}}
	\put(-2,-0.5){\line(1,-1){4}}
	\put(-2,-4.5){\circle*{0.7}}
	\put(-2,-0.5){\circle*{0.7}}
	\put(2,-4.5){\circle*{0.7}}
	\put(2,-0.5){\circle*{0.7}}
\end{picture}
}
 & 5 & yes & 2 \\ \cline{3-4}
 & & no & 2\\ \cline{2-4}
 & 6 & yes & 1\\ \cline{3-4}
 & & no & 6 \\ \cline{2-4}
 & 7 & no & 2 \\ \hline
\setlength\unitlength{2mm}
\parbox[c][\myheight][b]{0cm}{
\begin{picture}(4.5,4.5)(0,0)
	\put(-2,-4){\line(1,0){4}}
	\put(2,-4){\line(0,1){4}}
	\put(-2,-4){\line(0,1){4}}
	\put(-2,0){\line(1,0){4}}
	\put(-2,-4){\line(1,1){4}}
	\put(-2,-4){\circle*{0.7}}
	\put(-2,0){\circle*{0.7}}
	\put(2,-4){\circle*{0.7}}
	\put(2,0){\circle*{0.7}}
\end{picture}
}
 & 5 & yes & 7 \\ \cline{3-4}
 & & no & 1 \\ \cline{2-4}
 & 6 & no & 5 \\ \cline{2-4}
 & 7 & no & 1 \\ \hline
\setlength\unitlength{2mm}
\parbox[c][\myheight][b]{0cm}{
\begin{picture}(4.5,4.5)(0,0)
	\put(-2,-4.5){\line(1,0){4}}
	\put(2,-4.5){\line(0,1){4}}
	\put(-2,-4.5){\line(0,1){4}}
	\put(-2,-0.5){\line(1,0){4}}
	\put(-2,-4.5){\line(1,1){4}}
	\put(-2,-0.5){\line(1,-1){4}}
	\put(-2,-4.5){\circle*{0.7}}
	\put(-2,-0.5){\circle*{0.7}}
	\put(2,-4.5){\circle*{0.7}}
	\put(2,-0.5){\circle*{0.7}}
\end{picture}
}
 & 4 & yes & 1 \\ \cline{2-4}
 & 5 & yes & 2 \\ \cline{3-4}
 & & no & 4 \\ \cline{2-4}
 & 6 & yes & 1\\ \cline{3-4}
 & & no & 1 \\ \hline
\end{tabular}
}
\end{center}
\end{table}
\begin{figure}[t]
\begin{minipage}{0.50\hsize}
\begin{center}
\includegraphics[width=5.6cm]{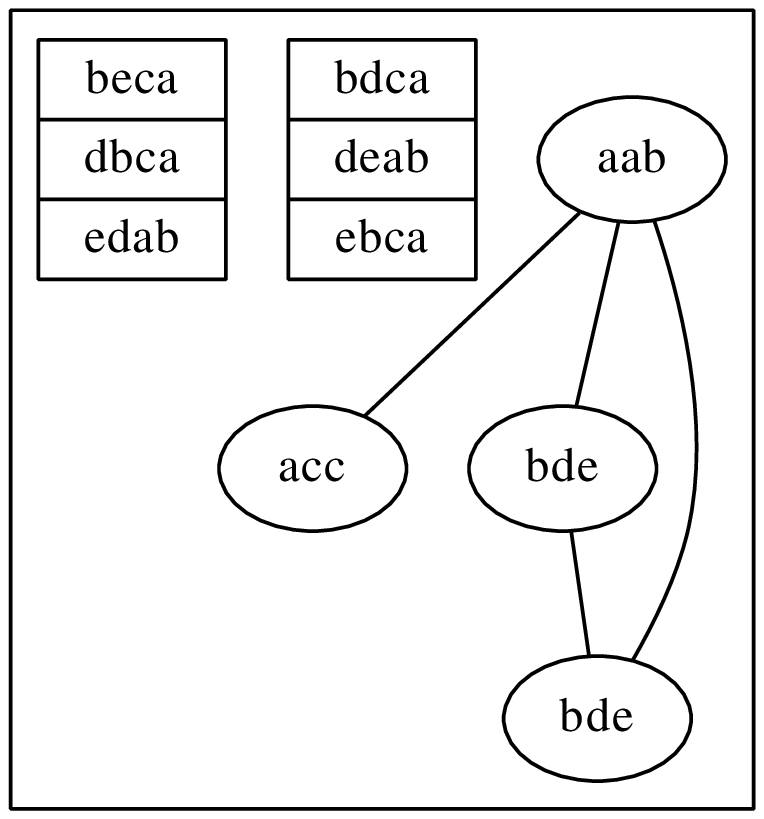}
\vspace{-5mm}
\figcaption{}
\label{fig:4_need3_0}
\end{center}
\end{minipage}
\begin{minipage}{0.50\hsize}
\begin{center}
\includegraphics[width=5.6cm]{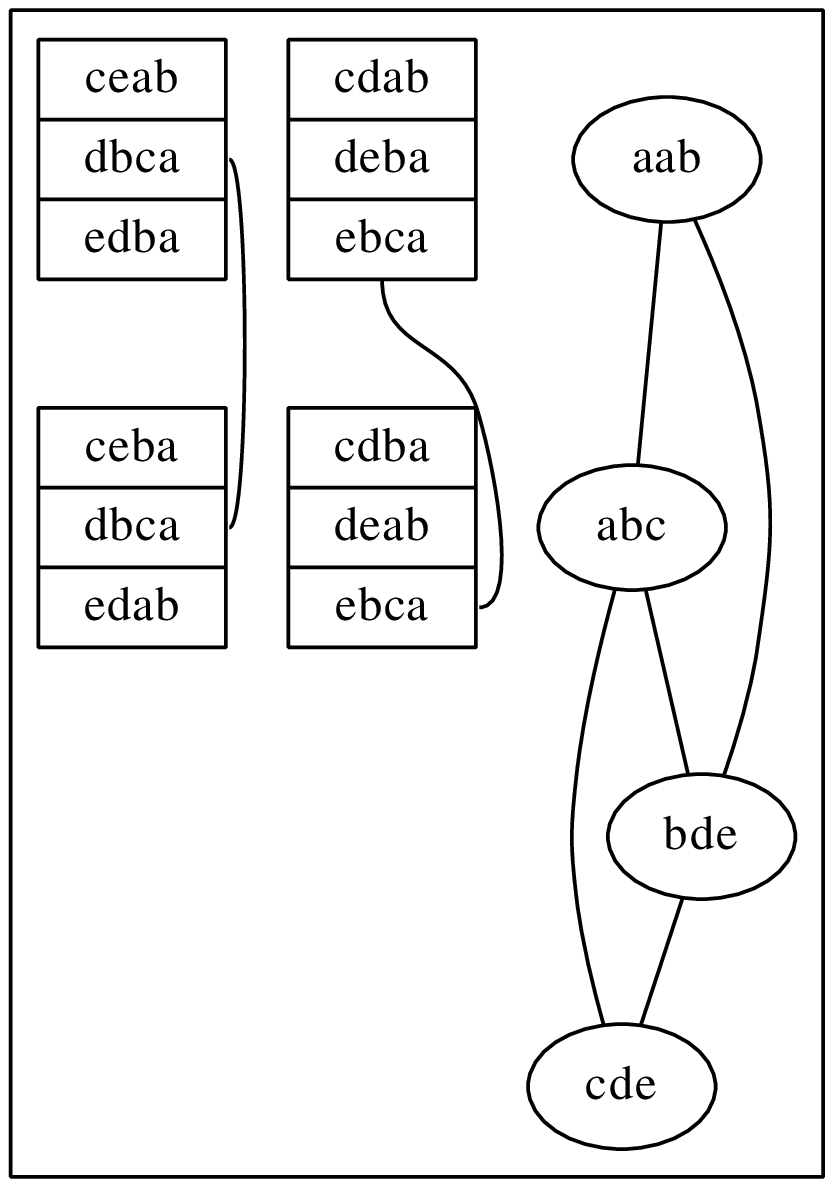}
\vspace{-5mm}
\figcaption{}
\label{fig:4_need3_2}
\end{center}
\end{minipage}
\end{figure}

The rest of this section is devoted to the evaluation of the number of moves of degree three in a minimal Markov basis.
We first evaluate the sizes of equivalence classes of the sufficient statistic $M$'s with connected $G_M$.
For the fiber $\mathcal{F}_M$ associated with a given sufficient statistic $M$, let $\mathcal{G}_{\mathcal{F}_M}$ be a graph on the vertex set $\mathcal{F}_M$ defined as follows: an edge $\{ x, y\}$ for $x,y\in \mathcal{F}_M$ exists if and only if $x$ and $y$ are connected by a move of degree two.
\tabref{table:number_of_graphs} counts the sufficient statistic $M$'s in each equivalence classes classified by the length $r$ of ranking, the number $n_M$ of the candidates appearing in $M$, and the number $N_M$ of connected components of the graph $\mathcal{G}_{\mathcal{F}_M}$.
\begin{table}[t]
\begin{center}
\renewcommand{\tabcolsep}{5pt}
{\footnotesize
\tblcaption{Sizes of equivalence classes of sufficient statistics.}
\label{table:number_of_graphs}
\begin{tabular}{|r|r|r||r| |r|r|r||r|}
\hline
$r$ & $n_M$ & $N_M$ & size of equiv. class & $r$ & $n_M$ & $N_M$ & size of equiv. class\\
\hline\hline
1&1&1&1 & 3&6&1&8820 \\\cline{2-4}\cline{6-8}
&2&1&2 & &7&1&4410 \\\cline{2-4}\cline{5-8}
&3&1&1 & 4&4&1&1128 \\\cline{1-4}\cline{7-8}
2&2&1&2 & &&2&144 \\\cline{2-4}\cline{6-8}
&3&1&30 & &5&1&82080 \\\cline{3-4}\cline{7-8}
&&2&1 & &&2&23040 \\\cline{2-4}\cline{7-8}
&4&1&60 & &&4&600 \\\cline{2-4}\cline{6-8}
&5&1&30 & &6&1&885240 \\\cline{1-4}\cline{7-8}
3&3&1&30 & &&2&60480 \\\cline{3-4}\cline{6-8}
&&2&1 & &7&1&2847600 \\\cline{2-4}\cline{7-8}
&4&1&1128 & &&2&37800 \\\cline{3-4}\cline{6-8}
&&2&144 & &8&1&3749760 \\\cline{2-4}\cline{6-8}
&5&1&5760 & &9&1&1814400 \\\cline{3-8}
&&2&150 & &&&\\\hline
\end{tabular}
}
\end{center}
\end{table}

Using \tabref{table:number_of_graphs}, the number of moves of degree three in a minimal Markov basis can be calculated.
To illustrate the process of this calculation we define some notations.
Let $[M]_{r^{\prime},n^{\prime},N^{\prime}}$ be the equivalence class whose length of ranking is $r^{\prime}$, the number of candidates is $n^{\prime}$, and the number of connected component of $\mathcal{G}_{\mathcal{F}_M}$ is $N^{\prime}$.
Let $n_{r^{\prime},n^{\prime},N^{\prime}} = n_M, M \in [M]_{r^{\prime},n^{\prime},N^{\prime}}$.
Let $N_{r^{\prime},n^{\prime},N^{\prime}} = N_M, M \in [M]_{r^{\prime},n^{\prime},N^{\prime}}$.
For example, $n_{2,3,1}=3$ and $N_{2,3,1}=1$.
Denote the size of equivalence class $[M]_{r,n^{\prime},N^{\prime}}$ by $\# [r,n^{\prime},N^{\prime}]$.
For example, $\# [1,1,1]=1$ and $\# [2,4,1]=60$.
Furthermore, for simplicity, we set
\[
	I[n, r^{\prime}, n^{\prime}, N^{\prime}] 
	= \binom{n}{n^{\prime}} \# [r^{\prime},n^{\prime},N^{\prime}].
\]

Consider the case $r=2$.
The number of moves of degree three is
\begin{eqnarray*}
	I[n,2,3,2] (N_{2,3,2}-1) = \binom{n}{3} \times 1 \times (2-1) = \binom{n}{3}.
\end{eqnarray*}

Consider the case $r=3$.
The number of moves of degree three for the partition $(3)$ of three is 
\begin{eqnarray*}
	&& I[n,3,3,2] (N_{3,3,2}-1) + I[n,3,4,2] (N_{3,4,2}-1) + I[n,3,5,2] (N_{3,5,2}-1) \\
	&=& \binom{n}{3} + 144 \binom{n}{4} + 150 \binom{n}{5}.
\end{eqnarray*}
The number of moves of degree three for the partition $(2,1)$ of three is 
\begin{eqnarray*}
	&& \binom{3}{2} I[n,2,3,2] 
		\times \Bigl( 
		I[n-n_{2,3,2}, 1,1,1] (N_{2,3,2} N_{1,1,1} -1) \\
		&& +I[n-n_{2,3,2}, 1,2,1] (N_{2,3,2} N_{1,2,1} -1) 
		+I[n-n_{2,3,2}, 1,3,1] (N_{2,3,2} N_{1,3,1} -1) 
		\Bigr) \\
	&=& 12 \binom{n}{4} + 60 \binom{n}{5} + 60 \binom{n}{6}.
\end{eqnarray*}
Then the number of moves of degree three for $r=3$ is
\begin{eqnarray*}
	\binom{n}{3} +156 \binom{n}{4} +210 \binom{n}{5} + 60 \binom{n}{6}.
\end{eqnarray*}

Consider the case $r=4$.
The number of moves of degree three for the partition $(4)$ of four is 
\begin{eqnarray*}
	&& I[n,4,4,2] (N_{4,4,2}-1) 
		+ I[n,4,5,2] (N_{4,5,2}-1) 
		+ I[n,4,5,4] (N_{4,5,4}-1) \\
		&& + I[n,4,6,2] (N_{4,6,2}-1) 
		+ I[n,4,7,2] (N_{4,7,2}-1) \\
	&=& 144 \binom{n}{4} + 24840 \binom{n}{5} + 60480 \binom{n}{6} +37800 \binom{n}{7}.
\end{eqnarray*}
The number of moves of degree three for the partition $(3,1)$ of four is 
\begin{eqnarray*}
	&& \binom{4}{3} \left(
		I[n,3,3,2] 
		\times \Bigl( 
		I[n-n_{3,3,2}, 1,1,1] (N_{3,3,2} N_{1,1,1} -1) \right. \\
		&& +I[n-n_{3,3,2}, 1,2,1] (N_{3,3,2} N_{1,2,1} -1) 
		+I[n-n_{3,3,2}, 1,3,1] (N_{3,3,2} N_{1,3,1} -1) 
		\Bigr) \\
	&& + I[n,3,4,2] 
		\times \Bigl( 
		I[n-n_{3,4,2}, 1,1,1] (N_{3,4,2} N_{1,1,1} -1) \\
		&& +I[n-n_{3,4,2}, 1,2,1] (N_{3,4,2} N_{1,2,1} -1) 
		+I[n-n_{3,4,2}, 1,3,1] (N_{3,4,2} N_{1,3,1} -1) 
		\Bigr) \\
	&& + I[n,3,5,2] 
		\times \Bigl( 
		I[n-n_{3,5,2}, 1,1,1] (N_{3,5,2} N_{1,1,1} -1) \\
		&& \left. +I[n-n_{3,5,2}, 1,2,1] (N_{3,5,2} N_{1,2,1} -1) 
		+I[n-n_{3,5,2}, 1,3,1] (N_{3,5,2} N_{1,3,1} -1) 
		\Bigr) 
	\right) \\
	&=& 16 \binom{n}{4} + 2960 \binom{n}{5} + 20960 \binom{n}{6} + 45360 \binom{n}{7} +33600 \binom{n}{8}.
\end{eqnarray*}
The number of moves of degree three for the partition $(2,2)$ of four is 
\begin{eqnarray*}
	&& \frac{1}{2!} \binom{4}{2} I[n,2,3,2] 
		\times \Bigl( 
		I[n-n_{2,3,2}, 2,2,1] (N_{2,3,2} N_{2,2,1} -1) \\
		&& + I[n-n_{2,3,2}, 2,3,1] (N_{2,3,2} N_{2,3,1} -1) 
		+ I[n-n_{2,3,2}, 2,3,2] (N_{2,3,2} N_{2,3,2} -1) \\
		&& + I[n-n_{2,3,2}, 2,4,1] (N_{2,3,2} N_{2,4,1} -1) 
		+ I[n-n_{2,3,2}, 2,5,1] (N_{2,3,2} N_{2,5,1} -1) 
		\Bigr) \\
	&=& 120 \binom{n}{5} + 3780 \binom{n}{6} + 12600 \binom{n}{7} + 10080 \binom{n}{8}.
\end{eqnarray*}
The number of moves of degree three for the partition $(2,1,1)$ of four is 
\begin{eqnarray*}
	&& \frac{1}{2!} \frac{4!}{2!1!1!} I[n,2,3,2] \\
	&&\times \Bigl( 
		I[n-n_{2,3,2}, 1,1,1] \bigl(
			I[n-n_{2,3,2}-n_{1,1,1}, 1,1,1] (N_{2,3,2} N_{1,1,1}N_{1,1,1} -1) \\
			&&+I[n-n_{2,3,2}-n_{1,1,1}, 1,2,1] (N_{2,3,2} N_{1,1,1}N_{1,2,1} -1) \\
			&&+I[n-n_{2,3,2}-n_{1,1,1}, 1,3,1] (N_{2,3,2} N_{1,1,1}N_{1,3,1} -1) 
			\bigr) \\
		&&+I[n-n_{2,3,2}, 1,2,1] \bigl(
			I[n-n_{2,3,2}-n_{1,2,1}, 1,1,1] (N_{2,3,2} N_{1,2,1}N_{1,1,1} -1) \\
			&&+I[n-n_{2,3,2}-n_{1,2,1}, 1,2,1] (N_{2,3,2} N_{1,2,1}N_{1,2,1} -1) \\
			&&+I[n-n_{2,3,2}-n_{1,2,1}, 1,3,1] (N_{2,3,2} N_{1,2,1}N_{1,3,1} -1) 
			\bigr) \\
		&&+I[n-n_{2,3,2}, 1,3,1] \bigl(
			I[n-n_{2,3,2}-n_{1,3,1}, 1,1,1] (N_{2,3,2} N_{1,3,1}N_{1,1,1} -1) \\
			&&+I[n-n_{2,3,2}-n_{1,3,1}, 1,2,1] (N_{2,3,2} N_{1,3,1}N_{1,2,1} -1) \\
			&&+I[n-n_{2,3,2}-n_{1,3,1}, 1,3,1] (N_{2,3,2} N_{1,3,1}N_{1,3,1} -1) 
			\bigr)
		\Bigr) \\
		&=& 120 \binom{n}{5} + 1440 \binom{n}{6} +6720 \binom{n}{7} + 13440 \binom{n}{8} + 10080 \binom{n}{9}.
\end{eqnarray*}
Then the number of moves of degree three for $r=4$ is
\begin{eqnarray*}
	160 \binom{n}{4} + 28040 \binom{n}{5} + 86660 \binom{n}{6} +102480 \binom{n}{7} + 57120 \binom{n}{8} + 10080 \binom{n}{9}.
\end{eqnarray*}

By the similar calculation we obtain the following polynomial which represents the number of moves of degree two for the case $r=5$:
\begin{eqnarray*}
	&& 28840 \binom{n}{5} + 6883200 \binom{n}{6} + 36009400 \binom{n}{7} + 83316800 \binom{n}{8} + 107898000 \binom{n}{9} \\
	&& + 76104000 \binom{n}{10} + 27720000 \binom{n}{11} + 3696000 \binom{n}{12}.
\end{eqnarray*}

The number of moves of degree three in minimal Markov bases are summarized as \tabref{table:number_of_three}.
The authors confirmed that the numbers above the horizontal lines in \tabref{table:number_of_three} coincides with the numbers obtained by the software 4ti2\cite{4ti2}.
\begin{table}[t]
\centering
\renewcommand{\tabcolsep}{5pt}
{\footnotesize
\caption{Number of moves of degree three.}
\label{table:number_of_three}
\begin{tabular}{|r||r|r|r|r|}
\hline
\backslashbox{$n$}{$r$} &2&3&4&5\\\hline\hline
1&0&0&0&0\\
2&0&0&0&0\\
3&1&1&0&0\\
4&4&160&160&0\\
5&10&1000&28840&28840\\\cline{4-5}
6&20&3680&257300&7056240\\
7&35&10325&1303540&84797440\\
8&56&24416&4884880&565736640\\\cline{3-3}
9&84&51240&15046080&2735910240\\
10&120&98400&40267080&10678207680\\\hline
\end{tabular}
}
\end{table}

\section{Discussion}
\label{sec:discussion}

In this section we discuss some topics related to our main result.

\subsection{Extension of fibers by allowing one negative element}
\label{subsec:negative}
As discussed after the proof of Lemma  \ref{lem:improper transposition lemma},~
we have shown the following result by our proof of ~Theorem \ref{thm:main} (cf.\ \eqref{eq:all move}).

\begin{proposition}
\label{prop:negative cell}
Let $P$ and $P^{\prime}$ be any two proper datasets with the same sufficient statistic. 
If we allow improper datasets as the intermediate states, $P$ and $P^{\prime}$ are connected by swap operations among two votes, whose single operation is used at each step.
\end{proposition}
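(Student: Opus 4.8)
The plan is to prove the statement by induction on the number $N$ of votes, processing one vote at a time and invoking the two transposition lemmas already established. The essential observation is that every ``swap operation among two votes'' occurring in Lemmas \ref{lem:proper transposition lemma} and \ref{lem:improper transposition lemma} is, by definition, a single operation on exactly two votes. Thus, in contrast with the proof of Theorem \ref{thm:main}---where consecutive \emph{proper} datasets had to be reconnected through three-vote moves in order to obtain binomials of degree three---permitting improper datasets as intermediate states allows us to remain entirely at the level of single two-vote operations. Concretely, I would prove the slightly stronger claim that the assertion holds when $P$ is replaced by any (proper or improper) dataset $D$ whose sufficient statistic equals that of the proper dataset $P^{\prime}$; the base case $N=1$ is immediate, since a single-vote dataset with a nonnegative sufficient statistic matching a proper vote must itself equal that proper vote.

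For the inductive step, fix a target vote of $P^{\prime}$, say its first vote, and a vote of the current dataset, and let $C$ be the number of concurrences in \eqref{eq:coincidences}. If $C=r$ the two votes coincide; otherwise I would run an inner loop that strictly increases $C$ using only single two-vote swap operations. The loop alternates according to the current state: from a proper dataset I apply Lemma \ref{lem:proper transposition lemma}, which increases $C$ by two-vote swap operations and, whenever it produces an improper dataset, guarantees that the new improper vote forms a resolvable pair with the tracked vote; from an improper dataset I apply Lemma \ref{lem:improper transposition lemma} with $i_{\rm pr}$ set to the index of the tracked vote, which either increases the concurrences of that vote or returns a proper dataset, again using only two-vote swap operations and again preserving the resolvable-pair invariant. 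Since $C\in\{0,1,\dots,r\}$ is strictly monotone, the loop terminates after finitely many steps with the tracked vote matched to its target.

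The step requiring the most care, and the main obstacle, is the bookkeeping that prevents the single negative cell from ever contaminating a vote that has already been matched. This is exactly controlled by the resolvable-pair clauses: both Lemma \ref{lem:proper transposition lemma} and Lemma \ref{lem:improper transposition lemma} assert that, when the outcome is improper, the improper vote $i_{\rm im}$ is distinct from the tracked proper vote (the resolvable pair is $[i_{\rm im},i_{\rm pr}]$ with $i_{\rm pr}$ the tracked index). Hence when the tracked vote finally matches its target it is proper and may be removed from both datasets; the improper element, if present, survives only among the remaining unmatched votes. The reduced datasets then still share a common sufficient statistic, with the reduced copy of $P^{\prime}$ still proper, so the induction hypothesis applies to the remaining $N-1$ votes. (When only one unmatched vote remains it is automatically proper and equal to its target, since the common sufficient statistic forces it once the other $N-1$ votes agree.) Concatenating the inner loops across all removals yields a single chain of two-vote swap operations---the version of \eqref{eq:all move} with no intermediate return to proper datasets---connecting $P$ to $P^{\prime}$, which completes the proof.
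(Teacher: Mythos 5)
Your proposal is correct and takes essentially the same approach as the paper: the paper's proof of this proposition is precisely the iterative argument summarized after Lemma \ref{lem:improper transposition lemma} (cf.\ \eqref{eq:all move}), which alternates Lemmas \ref{lem:proper transposition lemma} and \ref{lem:improper transposition lemma} to match one vote at a time, discards the matched vote, and uses the resolvable-pair clauses plus nonnegativity of the reduced sufficient statistic to continue with the remaining votes. Your induction on $N$ with the strengthened hypothesis merely formalizes that iteration; the only detail worth making explicit is that a round starting from an improper dataset must track the proper member of a resolvable pair (which always exists), exactly the paper's remark that the leftover improper dataset contains a resolvable pair $[i_{\rm im},i_{\rm pr}]$ with $i_{\rm pr}\neq i$.
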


Since an improper dataset has one $-1$, Proposition \ref{prop:negative cell} seems to suggest that every fiber ${\cal F}_\Bt$ for the configuration $A_{n,r}$ becomes connected by degree two moves if we extend ${\cal F}_\Bt$ by allowing one negative element $x(\sigma)=-1$ in $\Bx$ which satisfies $\Bt= A \Bx$.  
However, this is incorrect. 
In fact, allowing $-1$ in dataset and allowing $-1$ in ${\cal F}_\Bt$ are two different things.

For example, consider the case of $n=3$ and $r=2$ with candidates labeled as $a,b,c$. 
It is easy to see that $\dim \ker A_{3,2}=1$ and $I_{A_{3,2}}$ is a principal ideal generated by a single binomial $p(ab)p(bc)p(ca)-p(ac)(cb)p(ba)$.
Hence there is no degree two move in $\kerz A_{3,2}$.  Yet, we can connect two datasets
\[
P=
\left[
\begin{tabular}{cc}
$a$&$b$\\
$b$&$c$\\
$c$&$a$
\end{tabular}
\right]
, \qquad
P^{\prime}=
\left[\begin{tabular}{cc}
$a$&$c$\\
$c$&$b$\\
$b$&$a$
\end{tabular}
\right]
\]
by applying $\{2,3\}:a\swapj{1} b $, \  $\{1,2\}: b \swapj{2} c$ and 
$\{2,3\}: a\swapj{1} c$  in this order:
\[
\left[
\begin{tabular}{cc}
$a$&$b$\\
$b$&$c$\\
$c$&$a$
\end{tabular} 
\right]
\rightarrow
\left[
\begin{tabular}{cc}
$a$&$b$\\
$a$&$c$\\
$b+c-a$&$a$
\end{tabular} 
\right]
\rightarrow
\left[\begin{tabular}{cc}
$a$&$c$\\
$a$&$b$\\
$b+c-a$&$a$
\end{tabular}
\right]
\rightarrow
\left[\begin{tabular}{cc}
$a$&$c$\\
$c$&$b$\\
$b$&$a$
\end{tabular}
\right].
\]
Note that the middle two datasets can be interpreted either as
\[
\text{adding } (ab),(ac),(bc),(ca) \text{ and subtracting } (ac)
\]
or as
\[
\text{adding } (ab),(ac),(ba),(cb) \text{ and subtracting } (ab).
\]
However the middle two datasets do not correspond to an element of a fiber for $A_{3,2}$.

\subsection{Normality}
\label{subsec:normality}
It is natural to ask if the semigroup generated by $A_{n,r}$ is normal.
Consider the set $Q$ of $n\times r$ real matrices $X=\{x_{ij}\}$ satisfying
\[
0\le x_{ij} \le 1, \ \forall i,j,  \qquad \sum_{j=1}^r x_{ij}\le 1, \forall i, \qquad
\sum_{i=1}^n x_{ij} =1, \forall j.
\]
The set $Q$ is the Birkhoff polytope in ${\mathbb R}^{n\times r}$, which is a special case of transportation polytopes\cite{haase-paffenholz-2009}.
By \cite{brualdi-dahl} the set of vertices of $Q$ is exactly the same as the set of columns
of $A_{n,r}$.  
Then by the results of \cite{ohsugi-hibi-2001} and \cite{sullivant-2006} the semigroup generated by $A_{n,r}$ is normal.
Lemma \ref{lem:collision lemma} is a consequence of this normality, because by the normality 
there exists two valid votes with the same sufficient statistic as the $i$-th and the $i'$-th votes in Lemma \ref{lem:collision lemma}.
These two proper votes can be obtained from the two votes of a swap operation
in Lemma \ref{lem:collision lemma}.
However, the normality is not useful in proving Lemmas \ref{lem:extended-1} and \ref{lem:extended-2}.

\subsection{Generation of moves for running a Markov chain}
\label{subsec:MCMC}
Based on Theorem \ref{thm:main} we can run a Markov chain for general $r$ as follows.
We randomly generate two or three proper votes of $r$ candidates out of $n$ candidates.
Once these votes are obtained, we randomly perform permutations of candidates in the same position.
We do this for each position.  If no collision occurs, then we have two or three proper votes.
If the obtained set of votes is different from the initial set, then the difference is a move.  
In this way, we obtain a random move of degree two or three and then run a Markov chain over a given fiber.

\section*{Acknowledgment}
This work is partially supported by Grant-in-Aid for JSPS Fellows (No. 12J07561) from Japan Society for the Promotion of Science (JSPS).
The authors thank to the anonymous referees for the valuable comments.

\bibliographystyle{abbrv}
\bibliography{birkhoff}

\end{document}